\documentclass{amsart}
\pdfoutput=1

\usepackage[utf8]{inputenc}
\usepackage{amsfonts,mathtools,amsmath,amsthm,amssymb,tikz,color,enumitem,latexsym,tikz-cd,mathrsfs,graphicx,verbatim,MnSymbol}
\usepackage[algoruled,linesnumbered]{algorithm2e}
\usepackage[breaklinks]{hyperref}
\usepackage{thmtools}
\usepackage{thm-restate}
\usepackage[nameinlink]{cleveref}
\usepackage{ogonek}
\usepackage[center]{caption}

\usepackage[backref=true]{biblatex}
\bibliography{sources.bib}
\allowdisplaybreaks

\hypersetup{
	colorlinks,
	hyperfootnotes=true,
	linkcolor={red!50!black},
	citecolor={blue!50!black},
	urlcolor={blue!80!black}
}

\DefineBibliographyStrings{english}{%
	backrefpage = {page},%
	backrefpages = {pages},%
}

\usetikzlibrary{arrows}

\tikzset{slopearrow/.style={sloped, anchor=south}}

\declaretheorem[name=Theorem,refname={theorem,theorems},
Refname={Theorem,Theorems},numberwithin=section]{thm}
\declaretheorem[name=Proposition,refname={proposition,propositions},
Refname={Proposition,Propositions},sibling=thm]{prop}

\declaretheorem[name=Lemma,refname={lemma,lemmas},
Refname={Lemma,Lemmas},sibling=thm]{lem}

\declaretheorem[name=Conjecture,refname={conjecture,conjectures},
Refname={Conjecture,Conjectures},sibling=thm]{conj}

\def\CC{\mathbb C}

\def\RR{\mathbb R}
\def\ZZ{\mathbb Z}
\def\QQ{\mathbb Q}
\def\FF{\mathbb F}
\def\PP{\mathbb P}

\DeclareMathOperator\rank{rank}

\DeclareMathOperator\ord{ord}

\DeclareFontFamily{U}{wncy}{}
\DeclareFontShape{U}{wncy}{m}{n}{<->wncyr10}{}
\DeclareSymbolFont{mcy}{U}{wncy}{m}{n}
\DeclareMathSymbol{\Sh}{\mathord}{mcy}{"58}

\title{Root numbers of a family of elliptic curves and two applications}
\author{Jonathan Love}
\address[Jonathan Love]{McGill University}
\email{jon.love@mcgill.ca}
\thanks{Supported by CRM-ISM postdoctoral fellowship}
\date{October 2023}

\begin{document}
	
	\begin{abstract}
		For each $t\in\QQ\setminus\{-1,0,1\}$, define an elliptic curve over $\QQ$ by
		\begin{align*}
			E_t:y^2=x(x+1)(x+t^2).
		\end{align*}
		Using a formula for the root number $W(E_t)$ as a function of $t$ and assuming some standard conjectures about ranks of elliptic curves, we determine (up to a set of density zero) the set of isomorphism classes of elliptic curves $E/\QQ$ whose Mordell-Weil group contains $\ZZ\times \ZZ/2\ZZ\times \ZZ/4\ZZ$, and the set of rational numbers that can be written as a product of the slopes of two rational right triangles.
	\end{abstract}

	\maketitle
	
	\section{Introduction}
	
	Let $E$ be an elliptic curve over $\QQ$. By a theorem of Mordell (see for example~\cite[Theorem VIII.4.1]{silverman}), the group $E(\QQ)$ of rational points is isomorphic to $E(\QQ)_{\text{tors}}\times \ZZ^{r}$, where $E(\QQ)_{\text{tors}}$ is a finite abelian group and $r=\rank E(\QQ)$ is a non-negative integer known as the \emph{rank} of $E(\QQ)$. While the torsion subgroup $E(\QQ)_{\text{tors}}$ is straightforward to compute~\cite[Corollary VIII.7.2]{silverman}, the rank of $E(\QQ)$ is considerably more mysterious. There are no algorithms that are guaranteed to successfully compute the rank of a given elliptic curve over $\QQ$, and there are many unproven conjectures and open problems relating to the distribution and properties of ranks of elliptic curves; see~\cite{rubinsilverberg} for a survey.
	
	Given $t\in\QQ$, define the curve $E_t$ over $\QQ$ by
	\begin{align}\label{eq:Edef}
		E_t:y^2=x(x+1)(x+t^2).
	\end{align} 
	This is an elliptic curve for all $t\in\QQ\setminus\{-1,0,1\}$, and this family of curves appears in several parametrization problems (see \cref{sec:apps}). 	For $x\in\QQ^\times$, let $\ord_p(x)$ denote the integer $n$ such that $x=p^n\frac{a}{b}$ for integers $a,b$ relatively prime to $p$. Given a set $S\subseteq\QQ$, define the \emph{density} of $S$ to be
	\[\lim_{X\to\infty} \frac{|\{\frac ab\in\QQ:|a|,|b|\leq X\}\cap S|}{|\{\frac ab\in\QQ:|a|,|b|\leq X\}|},\]
	if the limit exists.
	
	\begin{thm}\label{thm:rootnumtotest}
		Given $t\in\QQ\setminus\{0,\pm1\}$, let $\mathcal{P}_t$ denote the set consisting of odd primes $p$ with $\ord_p(t)\neq 0$, primes $p\equiv 1\mod 4$ with $\ord_p(t^2-1)>0$, and $2$ if and only if neither $\ord_2(t)=\pm2$ nor $\ord_2(t^2-1)=3$. Set 
		\[\mathcal{T}=\{t\in\QQ\setminus\{0,\pm1\}:|\mathcal{P}_t|\text{ even}\}\cup S.\]
		\begin{itemize}
			\item The set $\mathcal{T}$ has density $\frac12$.
			\item If \cref{conj:parity} holds, then $t\in \mathcal{T}$ implies $|E_t(\QQ)|=\infty$. 
			\item If \cref{conj:parity,conj:density} both hold, then there is a set $S\subseteq \QQ$ of density $0$ such that $|E_t(\QQ)|=\infty$ implies $t\in S\cup \mathcal{T}$.
		\end{itemize}
	\end{thm}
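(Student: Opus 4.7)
The argument splits cleanly into three pieces, all bound together by the explicit formula for the global root number $W(E_t)$ in terms of $|\mathcal{P}_t|$ that is (presumably) established earlier in the paper. I assume this formula implies that ``$|\mathcal{P}_t|$ is even'' is equivalent to ``$W(E_t)=-1$''. The set $S$ appearing in the definition of $\mathcal{T}$ is to be chosen so that bullets 2 and 3 become compatible; concretely, I take
\[S:=\bigl\{t\in\QQ\setminus\{0,\pm1\}:|\mathcal{P}_t|\text{ odd and }|E_t(\QQ)|=\infty\bigr\}.\]
With this choice, bullet 2 follows at once from \cref{conj:parity}: for $t\in\mathcal{T}\setminus S$, $W(E_t)=-1$ forces the rank to be odd, hence positive, and for $t\in S$ the infiniteness is built into the definition. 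Bullet 3 then becomes essentially a tautology given that $S\cup\mathcal{T}=\mathcal{T}$ already exhausts every $t$ with $|E_t(\QQ)|=\infty$; the content to verify is that $S$ has density $0$.

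To see that $S$ has density $0$: if $t\in S$, then $W(E_t)=+1$, so by \cref{conj:parity} the rank is even, and combined with $|E_t(\QQ)|=\infty$ this forces $\rank E_t(\QQ)\geq 2$. I expect \cref{conj:density} to assert exactly that the set of $t$ with $\rank E_t(\QQ)\geq 2$ has density zero, so $S$ does as well. As a byproduct, for $t\notin S\cup\mathcal{T}$ the rank vanishes, and since the torsion subgroup is finite---generically $\ZZ/2\ZZ\times\ZZ/4\ZZ$, using the three rational $2$-torsion points coming from the factored Weierstrass equation together with the $4$-torsion point $(-t,\,t(t-1))$---we have $|E_t(\QQ)|<\infty$, as required.

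The main work is bullet 1: that $\{t\in\QQ\setminus\{0,\pm1\}:|\mathcal{P}_t|\text{ even}\}$ has natural density $\tfrac12$ in $\QQ$ under the box-height counting function. Writing $t=a/b$ with $\gcd(a,b)=1$, the parity of $|\mathcal{P}_t|$ decomposes as the parity of the number of odd prime divisors of $ab$, plus the parity of the number of primes $p\equiv 1\pmod 4$ dividing $(a-b)(a+b)$, plus a bounded correction at $p=2$ depending on $(a,b)\bmod 16$. The plan is first to stratify $(a,b)$ by residue mod $16$ (trivialising the $p=2$ contribution), then to establish the character-sum estimate
\[\sum_{\substack{|a|,|b|\le X\\\gcd(a,b)=1}}(-1)^{|\mathcal{P}_{a/b}|}=o(X^2),\]
which together with the asymptotic $\#\{a/b:|a|,|b|\le X\}\sim\tfrac{12}{\pi^2}(2X+1)^2$ gives the density $\tfrac12$.

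The principal obstacle is achieving square-root cancellation in this two-variable sum in the presence of the nontrivial correlation between the prime factorizations of $ab$ and of $(a-b)(a+b)$. My approach is to fix $b$ and apply a Selberg--Delange (or Landau-type Tauberian) argument in the $a$-variable to the twisted Liouville-type function that records the parity of the number of split primes in $\ZZ[i]$ dividing $(a-b)(a+b)$, along arithmetic progressions modulo $16b$; the sign twist coming from odd primes dividing $b$ is a bounded factor pulled out of the $a$-sum, and the coprimality condition is handled by Möbius inversion. Summing the resulting $o(X)$ bound uniformly over $|b|\leq X$ yields the required $o(X^2)$, completing bullet 1 and hence the theorem.
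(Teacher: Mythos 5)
Your treatment of the second and third bullets is essentially sound and matches the paper's logic: the root-number formula gives $W(E_t)=-(-1)^{|\mathcal{P}_t|}$, \cref{conj:parity} turns $W(E_t)=-1$ into odd (hence positive) rank, and the exceptional set is absorbed into the locus where the rank is at least $2$. One point you leave unaddressed: \cref{conj:density} only bounds the density of $\{t:\rank E_t(\QQ)\geq 2+\rank\mathcal{E}(\QQ(T))\}$, so to conclude that $\{t:\rank E_t(\QQ)\geq 2\}$ has density zero you must first know that the generic rank $\rank\mathcal{E}(\QQ(T))$ is zero (and that $j$ is nonconstant). The paper establishes this separately (\cref{lem:MWgroup}, via a specialization/injectivity argument using a theorem of Gusi\'c--Tadi\'c); it is not automatic and cannot be waved away.

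The genuine gap is in your proof of the first bullet. After stratifying mod $16$, the quantity you must average is essentially $(-1)^{\omega(ab)}\,g\bigl((a-b)(a+b)\bigr)$ with $g(n)=(-1)^{\#\{p\equiv 1\,(4):\,p\mid n\}}$, i.e.\ a product of multiplicative functions evaluated at the four linear forms $a$, $b$, $a+b$, $a-b$. Your plan to fix $b$ and obtain cancellation in the $a$-sum by Selberg--Delange cannot work: Selberg--Delange (or any Tauberian argument on a single Dirichlet series) applies to the partial sums of one multiplicative function, not to correlations such as $\sum_{a\le X} h(a)\,g(a-b)\,g(a+b)$ along shifted arguments. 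For fixed $b$ this is a Chowla/Elliott-type correlation problem, which is not known by these methods and in close variants is open; no amount of M\"obius inversion over the coprimality condition or reduction to progressions mod $16b$ removes the shift. The two-variable averaging over $(a,b)$ is precisely what makes the statement accessible: the paper invokes Helfgott's Main Theorem 2, after checking that the relevant polynomials $M_{\mathcal{E}}=B_{\mathcal{E}}=xy(x+y)(x-y)$ attached to the places of multiplicative (indeed ``quite bad'') reduction of $\mathcal{E}/\QQ(T)$ are products of distinct linear factors, so that the hypotheses $\mathscr{A}_2$ and $\mathscr{B}_2$ hold unconditionally and the average of $t\mapsto W(E_t)$ over $\QQ$ is zero. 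If you want a self-contained argument you would need to prove a genuinely bivariate cancellation statement of that type, not a one-variable one summed over $b$.
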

	
	This provides an easily computable criterion which, assuming standard conjectures regarding ranks of elliptic curves, allows us to check whether $E_t(\QQ)$ has positive rank.
	
	We now discuss the conjectures assumed in \cref{thm:rootnumtotest}. To each elliptic curve $E/\QQ$ we can assign a \emph{root number} $W(E)\in\{-1,1\}$, which appears as the sign in the functional equation of the $L$-function of $E$ (see \cref{sec:background} for more details). A consequence of the Birch and Swinnerton-Dyer conjecture~\cite{bsd} is the following.
	\begin{conj}\label{conj:parity}
		For any elliptic curve $E/\QQ$, we have	$W(E)=(-1)^{\rank E(\QQ)}$.
	\end{conj}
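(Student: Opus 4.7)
The statement in question is the \emph{parity conjecture}, one of the major open problems in the arithmetic of elliptic curves, so rather than propose an outright proof I would outline the standard reductions to better-studied conjectures and summarize what is currently known unconditionally. By the modularity theorem of Wiles, Taylor, Breuil, Conrad and Diamond, the completed $L$-function $\Lambda(E,s)$ admits analytic continuation and satisfies a functional equation
\[\Lambda(E,s)=W(E)\Lambda(E,2-s),\]
so that $W(E)=(-1)^{\ord_{s=1}L(E,s)}$. Birch and Swinnerton-Dyer predict $\ord_{s=1}L(E,s)=\rank E(\QQ)$, and \cref{conj:parity} is precisely the weaker assertion that these two integers agree modulo $2$. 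A full proof of BSD would therefore suffice, but BSD itself is of course deeply open.

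A more tractable route proceeds through Selmer groups. For each prime $p$, let $s_p(E)$ denote the $\ZZ_p$-corank of the $p^\infty$-Selmer group of $E/\QQ$; a standard descent argument yields
\[s_p(E)=\rank E(\QQ)+\mathrm{corank}_{\ZZ_p}\Sh(E/\QQ)[p^\infty].\]
The \emph{$p$-parity conjecture} asserts that $(-1)^{s_p(E)}=W(E)$, and this is known unconditionally over $\QQ$ for every prime $p$ through the combined work of Nekov\'a\v{r}, Kim, and T.~and V.~Dokchitser. I would invoke these results to reduce \cref{conj:parity} to showing that $\mathrm{corank}_{\ZZ_p}\Sh(E/\QQ)[p^\infty]$ is even for some prime $p$. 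The Cassels--Tate pairing on the Tate--Shafarevich group already yields this parity statement whenever $\Sh(E/\QQ)[p^\infty]$ is finite, so it would be enough to establish finiteness of the $p$-primary part of $\Sh$ for a single prime $p$.

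The main obstacle — and, modulo the results quoted above, the entire residual content of the problem — is removing this finiteness hypothesis on $\Sh$. At present, finiteness of $\Sh(E/\QQ)$ is known only for specific elliptic curves of analytic rank at most $1$ (via Kolyvagin's theorem combined with the Gross--Zagier formula), and no general approach is currently in sight. Accordingly, I would not expect to resolve \cref{conj:parity} in the course of this paper and would instead, as the author does, adopt it as a working hypothesis for the downstream applications to the family $E_t$.
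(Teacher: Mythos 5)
You have correctly recognized that \cref{conj:parity} is stated as a conjecture and is not proved in the paper; the paper's own discussion in \cref{sec:background} takes essentially the same line as yours, deriving $W(E)=(-1)^r$ for the analytic rank $r$ from the functional equation, noting that BSD would imply the conjecture, and citing its unconditional status when $\Sh(E/\QQ)$ is finite or the analytic rank is at most $1$. Your additional reduction through the $p$-parity conjecture and Selmer coranks is a finer-grained account of the same state of knowledge, and your conclusion --- adopt the statement as a working hypothesis --- is exactly what the paper does.
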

	\noindent
	Assuming this conjecture, the root number can often be used as a stand-in for the rank. For instance, if $W(E)=-1$, then $\rank E(\QQ)$ is odd, which implies $E(\QQ)$ is infinite. We present a formula for the root number $W(E_t)$ (\cref{lem:rootnum}), and apply this description to two diophantine problems. An equivalent form of this root number formula was previously computed by Gonz\'alez-Jim\'enez and Xarles \cite[Proposition 11]{xarles}; we include a derivation in this paper for completeness.

	Given an elliptic curve $\mathcal{E}$ over a function field $\QQ(T)$, and any $t\in\QQ$, let $\mathcal{E}_t$ denote the specialization of $\mathcal{E}$ at $T=t$; $\mathcal{E}_t$ will be an elliptic curve for all but finitely many $t\in\QQ$.
	
	\begin{conj}\label{conj:density}
		For any elliptic curve $\mathcal{E}/\QQ(T)$ with nonconstant $j$-invariant, the set
		\[\left\{t\in\QQ:\rank \mathcal{E}_t(\QQ)\geq 2+\rank \mathcal{E}(\QQ(T))\right\}.\]
		has density zero.	
	\end{conj}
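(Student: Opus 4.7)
The plan is to combine the parity conjecture (\cref{conj:parity}) with an equidistribution statement for root numbers along the family, and then bridge the residual gap to density zero. First, by Silverman's specialization theorem, $\rank \mathcal{E}_t(\QQ) \geq r$ for all but finitely many $t\in\QQ$, where I write $r := \rank \mathcal{E}(\QQ(T))$. If in fact $\rank \mathcal{E}_t(\QQ) \geq r+2$, then parity of the excess forces $W(\mathcal{E}_t) = (-1)^r$ under \cref{conj:parity}, so it suffices to show that the set
\[\{t\in\QQ: W(\mathcal{E}_t) = (-1)^r \text{ and } \rank \mathcal{E}_t(\QQ) \geq r+2\}\]
has density zero.

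Next, adapting the strategy of Helfgott's thesis, I would derive an explicit formula for $W(\mathcal{E}_t)$ as a product of local root numbers $W_p(\mathcal{E}_t)$, each depending on $t$ only through $\ord_p(t)$ and $t \bmod p^{N(p)}$ for some $N(p)$. Under standard conjectures on squarefree values of polynomials of the form $\disc(\mathcal{E}_t)/(\text{fixed part})$, a sieve then shows that the density of $t$ with $W(\mathcal{E}_t)=+1$ exists and, for any family with nonconstant $j$-invariant, lies strictly between $0$ and $1$. Consequently the parity constraint can rule out at most about half of the rational $t$; it cannot by itself yield density zero.

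The hard part is bridging ``bad parity'' to ``excess rank $\geq 2$.'' Quantitatively, one needs a minimalist principle of the following form: on the density-positive set where $W(\mathcal{E}_t)=(-1)^r$, the analytic rank $\ord_{s=1} L(\mathcal{E}_t,s)$ equals exactly $r$ for all but a density-zero set of $t$, rather than jumping to $r+2,\,r+4,\dots$. This is a statement about low-lying zeros of $L$-functions in a one-parameter family, predicted by the Katz-Sarnak random matrix heuristics but well outside the reach of current analytic technology; the analogous assertion even for the family of quadratic twists of a fixed curve is open. An alternative route avoids $L$-functions by bounding $\rank \mathcal{E}_t(\QQ) \leq \dim_{\FF_2}\mathrm{Sel}_2(\mathcal{E}_t)$ and studying the average $2$-Selmer rank along the family via Bhargava-Shankar-Tsimerman-type techniques, but adapting those methods to a prescribed one-parameter family where the descent map varies subtly with $t$ is itself a substantial undertaking. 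In short, I expect \cref{conj:density} to be approachable only via a genuine advance in either analytic number theory in families or in arithmetic statistics of Selmer groups, and I would regard it as a hard open problem rather than a consequence of currently available tools.
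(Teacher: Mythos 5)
The statement you were asked about is \cref{conj:density}, which the paper does not prove: it is stated as a conjecture, assumed as a hypothesis in \cref{thm:rootnumtotest}, and the paper merely points to the literature (the appendix of Conrad--Conrad and a remark of Miller) for discussion. Your proposal correctly recognizes this and does not claim a proof; your assessment---that root-number equidistribution plus parity can at best halve the density, and that the genuine obstacle is a ``minimalist''-type statement ruling out rank jumps by $2$, which is open even for quadratic twist families---is accurate and consistent with the paper's treatment of the statement as a hard open problem. One small caveat: as stated, \cref{conj:density} makes no reference to root numbers or to \cref{conj:parity}, so your reduction through the parity conjecture is an optional framing rather than part of the statement itself; the paper simply invokes the conjecture as a black box (applied to the family $\mathcal{E}$ with $\rank\mathcal{E}(\QQ(T))=0$ from \cref{lem:MWgroup}) to conclude that $\{t:\rank E_t(\QQ)\geq 2\}$ has density zero.
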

	See for instance \cite[Appendix A]{conradconrad} or \cite[Remark 3.1]{miller} for a discussion of this conjecture. 

	\subsection{Applications}\label{sec:apps}

	We introduce two interpretations of the curves $E_t$, each giving an application of \cref{thm:rootnumtotest}. First, considered as an elliptic curve over the function field $\QQ(T)$, \cref{eq:Edef} defines a model for the universal elliptic curve with $\ZZ/2\ZZ\times \ZZ/4\ZZ$ torsion subgroup.
	\begin{prop}
		Assume \cref{conj:parity}. If $t\in\mathcal{T}$, then 
		\[\ZZ\times\ZZ/2\ZZ\times\ZZ/4\ZZ\leq E_t(\QQ).\]
	\end{prop}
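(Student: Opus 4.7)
The strategy is to decompose the claim into a torsion part and a free part, each handled separately, and then combine them via the Mordell--Weil theorem. The content is essentially to exhibit enough explicit torsion and to invoke \cref{thm:rootnumtotest} for the infinite order contribution.

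The plan is to first verify that $\ZZ/2\ZZ\times\ZZ/4\ZZ \leq E_t(\QQ)_{\text{tors}}$ unconditionally, for every $t\in\QQ\setminus\{0,\pm 1\}$. Since $E_t$ is in the form $y^2 = x(x+1)(x+t^2)$ with three distinct rational roots, the three $2$-torsion points $(0,0)$, $(-1,0)$, $(-t^2,0)$ all lie in $E_t(\QQ)$, giving the subgroup $(\ZZ/2\ZZ)^2$. For a rational point of order $4$ I would try the candidate $P = (t, t(t+1))$: a direct check shows $t(t+1)(t+t^2) = t^2(t+1)^2$, so $P\in E_t(\QQ)$, and a duplication-formula computation shows $2P = (0,0)$. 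Hence $P$ is of order $4$ (doubling to a nontrivial $2$-torsion point), and the subgroup generated by $P$ together with $(-1,0)$ (which is not in $\langle P\rangle$, since $\langle P\rangle\cap E_t[2] = \{O,(0,0)\}$) is isomorphic to $\ZZ/2\ZZ\times\ZZ/4\ZZ$.

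Next, assuming \cref{conj:parity} and that $t\in\mathcal{T}$, \cref{thm:rootnumtotest} yields that $E_t(\QQ)$ is infinite. By the Mordell--Weil theorem we can write
\[
E_t(\QQ) \;\cong\; E_t(\QQ)_{\text{tors}}\times \ZZ^{r}
\]
with $r = \rank E_t(\QQ) \geq 1$. Combining this with the containment established in the previous paragraph gives
\[
\ZZ\times\ZZ/2\ZZ\times\ZZ/4\ZZ \;\leq\; \ZZ^r \times E_t(\QQ)_{\text{tors}} \;\cong\; E_t(\QQ),
\]
which is the desired conclusion.

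There is no real obstacle here: both ingredients are essentially packaged already. The only step requiring any calculation is the verification that $2(t,t(t+1)) = (0,0)$, which is a one-line application of the duplication formula on $E_t$ in the form $y^2 = x^3 + (1+t^2)x^2 + t^2 x$.
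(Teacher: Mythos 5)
Your proof is correct and follows essentially the same route as the paper: establish the torsion subgroup $\ZZ/2\ZZ\times\ZZ/4\ZZ$ unconditionally (your point $(t,\,t(t+1))$ of order $4$ is exactly one of the points the paper records in $A_t$, cf.\ \cref{eq:Adef,eq:torsionpoints}) and then obtain positive rank from \cref{thm:rootnumtotest} under \cref{conj:parity}. The only cosmetic difference is that you verify the order-$4$ point by a direct duplication-formula computation, whereas the paper cites the characterization of curves with $\ZZ/2\ZZ\times\ZZ/4\ZZ$-torsion via the family $\mathcal{E}$ and \cref{lem:MWgroup}.
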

	\begin{proof}
		We have $\ZZ/2\ZZ\times\ZZ/4\ZZ\leq E(\QQ)$ if and only if $E\cong E_t$ for some $t\in\QQ\setminus\{0,\pm 1\}$~, and \cref{thm:rootnumtotest} gives us a (conjectural) criterion under which $E_t(\QQ)$ has positive rank.
	\end{proof}
	
	Secondly, the curves $E_t$ parameterize solutions to a Diophantine problem with a simple geometric interpretation. Fix $t\in\QQ_{>0}$ and consider the system
	\begin{align}\label{eq:righttri}
		\left\{\begin{aligned}
			1+a^2&=b^2,\\
			t^2+a^2&=c^2,
		\end{aligned}\right.\qquad a,b,c\in\QQ_{>0}.
	\end{align}
	\begin{figure}
		\begin{center}
			\begin{tikzpicture}
				
				\newcommand\X{1.5}
				\newcommand\Y{2}
				\newcommand\side{3.5}
				
				\draw[dashed]  (\X,\side) edge (\X,\Y);
				\draw  (0,\Y) edge (\X,\Y);
				\draw  (\side,\Y) edge (\X,\Y);
				\draw[dashed] (\X,\side) edge (0,\Y);
				\draw[dashed] (\X,\side) edge (\side,\Y);
				
				\node at (0.85,1.75) {$1$};
				\node at (1.75,2.55) {$a$};

				\node at (2.35,1.75) {$t$};
				\node at (0.5,2.95) {$b$};
				\node at (2.65,2.95) {$c$};
			\end{tikzpicture}
			\caption{Solutions of \cref{eq:righttri}.}\label{fig:righttri}
		\end{center}
	\end{figure}
	Solutions of this system correspond to pairs of rational right triangles with a common leg such that the ratio of the remaining legs equals $t$ (see \cref{fig:righttri}); equivalently, a solution exists if and only if $t$ can be expressed as a product $t=\frac{a}{1}\cdot\frac{t}{a}$ of slopes of rational right triangles. We will prove the following classification.
	\begin{prop}\label{prop:classify}
		Let $t\in\QQ_{>0}$. 
		\begin{itemize}
			\item If $|E_t(\QQ)|=\infty$, then \cref{eq:righttri} has infinitely many solutions.
			\item If $|E_t(\QQ)|\neq\infty$ and $t$ and $t+1$ are both squares in $\QQ$, then \cref{eq:righttri} has a single solution 
			\[(a,b,c)=(\sqrt{t},\,\sqrt{t+1},\,\sqrt{t(t+1)}).\]
			\item If $|E_t(\QQ)|\neq\infty$ and either $t$ or $t+1$ is not a square in $\QQ$, then \cref{eq:righttri} has no solutions.
		\end{itemize}
	\end{prop}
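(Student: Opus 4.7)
The plan is to set up a bijection between solutions $(a,b,c)\in\QQ_{>0}^3$ of \cref{eq:righttri} and rational points $(x,y)\in E_t(\QQ)$ with $x>0$, $y>0$, and $x,\,x+1,\,x+t^2$ all rational squares, via $(a,b,c)\mapsto(a^2,abc)$ (with inverse recovering $(a,b,c)$ as positive square roots of $x$, $x+1$, $x+t^2$). These three square conditions on the $x$-coordinate are precisely the vanishing of the standard 2-descent map $\delta\colon E_t(\QQ)\to(\QQ^\times/\QQ^{\times 2})^3$ sending $(x,y)$ to $(x,\,x+1,\,x+t^2)$ modulo squares, so such points are exactly the elements of $2E_t(\QQ)$ with positive $x$-coordinate.

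For the first bullet, if $E_t(\QQ)$ is infinite, pick $P\in E_t(\QQ)$ of infinite order and consider $Q_n:=2nP$ for $n\geq 1$. Each $x(Q_n)$ is a nonzero rational square (nonzero because $nP$ is not 2-torsion), hence positive, yielding a solution; the $Q_n$ are pairwise distinct up to sign (else $(n\pm m)P$ would be 2-torsion), so we obtain infinitely many distinct solutions.

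For the second and third bullets, assume $E_t(\QQ)$ is finite, so any solution's image lies in $2E_t(\QQ)=2\cdot E_t(\QQ)_{\text{tors}}$. The torsion visibly contains a copy of $\ZZ/4\ZZ\times\ZZ/2\ZZ$ generated by $(-1,0)$ and $P_1:=(t,\,t(t+1))$, and by Mazur's theorem the only torsion groups extending this are $\ZZ/4\ZZ\times\ZZ/2\ZZ$ and $\ZZ/8\ZZ\times\ZZ/2\ZZ$. In the former case, $2E_t(\QQ)=\{O,(0,0)\}$ has no element with $x>0$, so no solution exists. In the latter case, $2E_t(\QQ)$ is cyclic of order $4$, and its two non-identity non-2-torsion elements form a pair $\pm P$ of rational 4-torsion points with $2P=(0,0)$; the candidates are $\pm P_1$ (with $x$-coordinate $t$) and $\pm P_2:=(-t,\,\pm t(t-1))$ (with $x$-coordinate $-t$), but the latter are ruled out because $\delta(P_2)$ has first component $-t<0$, which is not a rational square. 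So $P=\pm P_1$, and $P_1\in 2E_t(\QQ)$ forces $\delta(P_1)=(t,\,t+1,\,t(t+1))\equiv(1,1,1)$, i.e., $t$ and $t+1$ are both rational squares. Conversely, when $t$ and $t+1$ are both squares, $\delta(P_1)=0$ forces $P_1\in 2E_t(\QQ)$, which pins the torsion at $\ZZ/8\ZZ\times\ZZ/2\ZZ$ and produces the unique solution $(\sqrt t,\sqrt{t+1},\sqrt{t(t+1)})$.

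The main obstacle is correctly matching Mazur's torsion classification against the 2-descent condition to pin down exactly when $P_1$ lies in $2E_t(\QQ)$; this interplay is what distinguishes cases (2) and (3) of the proposition, and requires the observation that the alternative 4-torsion points $\pm P_2$ have negative $x$-coordinate and hence cannot satisfy $\delta=0$ for positive $t$.
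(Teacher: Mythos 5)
Your proof is correct, but it takes a genuinely different route from the paper's. The paper deduces the proposition from \cref{prop:eighttoone}, which builds an explicit eight-to-one map $\Phi$ from $E_t(\QQ)\setminus A_t$ onto the solution set $\mathcal{S}_t$ (via an auxiliary plane curve and the classical parametrization of Pythagorean triples), together with \cref{prop:extrators}, which uses Mazur's theorem and the duplication formula to show that $E_t(\QQ)_{\mathrm{tors}}$ strictly contains $A_t$ exactly when $t$ and $t+1$ are both squares. You instead identify solutions with the pairs $\{P,-P\}$ of non-$2$-torsion points of $2E_t(\QQ)$, using the complete $2$-descent criterion (the kernel of $(x,y)\mapsto(x,\,x+1,\,x+t^2)$ in $(\QQ^\times/\QQ^{\times 2})^3$ is $2E_t(\QQ)$), and then combine Mazur with the values of $\delta$ on the two pairs of order-$4$ points to compute $2E_t(\QQ)$ in the rank-zero case. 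The two arguments are in fact two presentations of the same map: since $x(2P)=\bigl(\tfrac{x^2-t^2}{2y}\bigr)^2$, the paper's $\Phi(P)$ is exactly $\bigl(\sqrt{x(2P)},\,\sqrt{x(2P)+1},\,\sqrt{x(2P)+t^2}\bigr)$, i.e.\ ``double, then take square roots,'' so your $2$-to-$1$ correspondence on $2E_t(\QQ)$ precomposed with the $4$-to-$1$ multiplication-by-$2$ map recovers the paper's $8$-to-$1$ count. Your formulation is more conceptual and makes transparent why $2E_t(\QQ)$ governs $\mathcal{S}_t$; the paper's is more explicit and its two lemmas carry additional information reused elsewhere. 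Two small points to tidy: you should record (e.g.\ via the duplication formula just quoted) that every order-$4$ point of $A_t$ doubles to $(0,0)$, which is what gives $2E_t(\QQ)=\{O,(0,0)\}$ in the $\ZZ/2\ZZ\times\ZZ/4\ZZ$ case; and the statement nominally allows $t=1$, where $E_1$ is singular and a separate elementary argument is needed --- the paper supplies this in the paragraph preceding \cref{prop:eighttoone}.
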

	The proof is given in \cref{sec:ratright}. As a consequence, \cref{thm:rootnumtotest} conjecturally classifies (up to a density zero set of exceptions) the set of rational numbers that can be written as a product of slopes of two rational right triangles in infinitely many ways.
	
	The family of curves $E_t$ has also been studied in other contexts; for instance, the subfamily
	\[y^2=x(x+1)\left(x+\left(\frac{1-u^2}{2u}\right)^2\right)\]
	was studied by Halbeisen and Hungerb\"uhler~\cite{halbeisen} in the context of solutions to \cref{eq:righttri} with the additional constraint that $1+t^2$ must also be a square. With this extra constraint, $1$, $t$, and $a$ form the sides of a box with rational edge lengths and rational face diagonals; hence \cref{thm:rootnumtotest} gives a leaky classifier for the possible ratios of side lengths in a rational cuboid (for a discussion of rational cuboids see~\cite{leech,luijk}). The author considers a more general family of curves and their relations to rational configurations in \cite{loveratdist}.
	
	\subsection{Comparison to congruent numbers}
		The system given by \cref{eq:righttri} bears a strong resemblance to the \emph{congruent number problem}, which fixes $n\in\ZZ_{>0}$ and asks for solutions to the system 
		\begin{align}\label{eq:congnum}
			\left\{\begin{aligned}
				a^2+b^2&=c^2,\\
				ab&=2n,
			\end{aligned}\right.\qquad a,b,c\in\QQ_{>0}.
		\end{align}
		In the same way that solutions to \cref{eq:righttri} correspond to rational points on $E_t$, solutions to \cref{eq:congnum} correspond to rational points on an elliptic curve $E^{(n)}:y^2=x^3-n^2x$. Further, in both cases there is a torsion subgroup of ``degenerate'' points that do not correspond to valid solutions of the original system. See for example the expositions~\cite{kconrad,chandrasekar} for more details on the congruent number problem.
		
		We highlight two key differences between the systems \cref{eq:righttri,eq:congnum} and their corresponding elliptic curves.
		\begin{itemize}
			\item One can prove that $E^{(n)}(\QQ)$ has no torsion points other than the degenerate points, so that \cref{eq:congnum} has a solution if and only if $E^{(n)}(\QQ)$ has positive rank. In contrast, it is possible for $E_t(\QQ)$ to have torsion points that yield valid solutions of \cref{eq:righttri}; see \cref{prop:extrators} for a classification of the values of $t$ for which this occurs.
			
			\item A formula due to Tunnell can be used to determine whether the analytic rank of $E^{(n)}$ is zero or positive~\cite{tunnell}, so by assuming the Birch and Swinnerton-Dyer conjecture, this gives a complete criterion that determines whether a given integer is a congruent number (not just up to a set of density zero). Tunnell's computation depends on the fact that the curves $E^{(n)}$ are all isomorphic over $\overline{\QQ}$. Specifically, each $E^{(n)}$ is a quadratic twist of the curve $y^2=x^3-x$, which allows Tunnell to apply a result due to Waldspurger~\cite{waldspurger} relating the central value of the $L$-function of an elliptic curve with that of each of its quadratic twists. The curves $E_t$, on the other hand, have different $j$-invariants, and so these techniques do not apply.
		\end{itemize} 
	
	\subsection{Outline}
	
	In \cref{sec:background}, we give more information about root numbers, and discuss prior work on computations of root numbers of elliptic curves in one-parameter families. We discuss the family of curves $E_t$ in \cref{sec:family} by defining a corresponding elliptic curve $\mathcal{E}$ over $\QQ(T)$, and compute the Mordell-Weil group $\mathcal{E}(\QQ(T))$. This is followed in \cref{sec:ratright} by a discussion of the Diophantine problem \cref{eq:righttri} and its relation to the family $\mathcal{E}$. Finally, the main results \cref{lem:rootnum,thm:rootnumtotest} are proved in \cref{sec:computeroot}.
	
	\subsection{Acknowledgments}
	The author would like to thank Michael Lipnowski, Andrew Granville, Dimitris Koukoulopoulos, and Xavier Xarles for several helpful conversations. During the writing of this note, the author was supported by a CRM-ISM Postdoctoral Fellowship.

	\section{Background: root numbers in families}\label{sec:background}
	An elliptic curve $E$ over $\QQ$ is a smooth genus one projective curve with a distinguished rational point. 
	Up to isomorphism, any such curve can be written as a (homogeneous) Weierstrass equation 
	\[y^2z+a_1xyz+a_3yz^2=x^3+a_2x^2z+a_4xz^2+a_6z^3,\]
	with $a_1,a_2,a_3,a_4,a_6\in\QQ$. The point $(x,y,z)=(0,1,0)$ is the only point with $z=0$; for convenience we often set $z=1$ to obtain an affine Weierstrass equation, and simply remember the existence of the additional point at infinity (labelled $O$). Define
	\begin{align*}
		b_2&=a_1^2+4a_2\\
		c_4&=b_2^2-24(2a_4+a_1a_3)\\
		c_6&=-b_2^3+36b_2(2a_4+a_1a_3)-216(a_3^2+4a_6)\\
		\Delta&=\frac{c_4^3-c_6^2}{1728}\qquad\qquad j=\frac{c_4^3}{\Delta}.
	\end{align*}
	The Weierstrass equation defines a nonsingular curve if and only if $\Delta\neq 0$. There are many different choices of coefficients that all yield curves isomorphic to $E$; we will refer to these as \emph{models} of $E$. If the coefficients $a_i$ are all integers, the Weierstrass equation defines an \emph{integral model} of $E$.
	
	Let $p$ be a prime, and let $E$ be given by an integral model. We recall how to compute the reduction type of $E$ at a prime $p$ (see for example~\cite[Section IV.9]{silvermanadv}). If $p\nmid\Delta$, then $E$ has \emph{good reduction} at $p$. If $p\mid\Delta$, then $\overline{E}$, the reduction of $E$ modulo $p$, has a singular point; replacing $E$ with an isomorphic Weierstrass model if necessary, we can assume the singularity of $\overline{E}$ is at $(0,0)$. Then if $b_2$ is a nonzero quadratic residue mod $p$, then $E$ has \emph{split multiplicative reduction} at $p$. If $b_2$ is not a quadratic residue mod $p$, then $E$ has \emph{non-split multiplicative reduction} at $p$. If $p\mid b_2$, then $E$ has \emph{additive reduction} at $p$.
	
	Now suppose $E$ is given by a \emph{minimal} integral model; that is, an integral model for which $|\Delta|$ is as small as possible. Set $a_p=p+1-|\overline{E}(\FF_p)|$. For $s\in\CC$, define 		
	\begin{align*}
		L_p(E,s):=\left\{\begin{array}{ll}
			(1-a_p p^{-s}+p^{1-2s})^{-1},&E\text{ has good reduction at }p;\\
			(1-p^{-s})^{-1},&E\text{ has split multiplicative reduction at }p;\\
			(1+p^{-s})^{-1},&E\text{ has non-split multiplicative reduction at }p;\\
			1,&E\text{ has additive reduction at }p.
		\end{array}\right.
	\end{align*}
	The $L$-function of $E$ is defined by
	\[L(E,s)=\prod_p L_p(E,s).\]
	This product converges for $s$ with real part at least $\frac32$. However, by the modularity theorem~\cite{modularity}, $L(E,s)$ has an analytic continuation to the entire complex plane, and satisfies the functional equation
	\begin{align}
		(2\pi)^{-s}\Gamma(s)N^{s/2}L(E,s)=W(E)(2\pi)^{s-2}\Gamma(2-s)N^{(2-s)/2}L(E,2-s),
	\end{align}
	where $N$ is the conductor of $E$ and $W(E)\in\{-1,1\}$ is the root number of $E$.
	
	Let $r\geq 0$ denote the order of vanishing of $L(E,s)$ at $s=1$; $r$ is called the \emph{analytic rank} of $E$. Then the functional equation can be written in the form
	\[f(s)(s-1)^r=W(E)f(2-s)(1-s)^r\]
	for some $f$ which is analytic and nonzero at $s=1$. Comparing the coefficient of $(s-1)^r$ on both sides, we must have $W(E)=(-1)^r$. The Birch and Swinnerton-Dyer conjecture~\cite{bsd} predicts that $r=\rank E(\QQ)$, which therefore implies \cref{conj:parity}. \Cref{conj:parity} has been proven for all $E$ such that the Tate-Shafarevich group of $E$ is finite~\cite{dokchister}, and has been proven unconditionally when $E$ has analytic rank $0$ or $1$, as a consequence of the Birch and Swinnerton-Dyer conjecture which has been proven in these cases~\cite{kolyvagin}.
	
	For each place $v$ of $\QQ$ ($v=p$ for $p$ a prime, or $v=\infty$), we can define a \emph{local root number} $W_p(E)$ using representation theory of the Weil-Deligne group of $E$ over $\QQ_p$ (originally defined in~\cite{deligne}; see also~\cite{weildeligne}). The local root number equals $1$ for all primes of good reduction for $E$, and can be computed explicitly for any given $E/\QQ$ and any prime $p$; this is classical for $p>3$ (see for example~\cite{rohrlich}), and the cases $p=2$ and $p=3$ were completed by Connell~\cite{connell} and Halberstadt~\cite{halberstadt}, with a reformulation of Halberstadt's results given by Rizzo~\cite{rizzo}. The (global) root number $W(E)$ is equal to the product of all local root numbers,\footnote{This is the original definition of $W(E)$~\cite{deligne}, and perhaps more natural than the definition of $W(E)$ as the sign in the functional equation, since the existence of the functional equation depends on the modularity theorem for curves over $\QQ$, and is not known for general number fields.} so unlike the rank, the root number of $E$ can be explicitly computed for any individual elliptic curve over $\QQ$. Algorithms to compute the root number have been implemented in computer algebra systems such as Magma.
	
	Despite the existence of efficient algorithms for computing the root number of an individual elliptic curve over $\QQ$, one cannot in general write down the root number in terms of a formula in the coefficients of an elliptic curve. However, if $\mathcal{E}$ is an elliptic curve over the function field $\QQ(T)$, it is typically possible (if tedious) to compute the root number $W(\mathcal{E}_t)$ of the specializations of $\mathcal{E}$ as a function of $t$. This has been done for many families of curves; some examples include the families $y^2=x^3-Dx$ and $y^3=x^3+A$ by Birch and Stephens~\cite{birchstephens}, and the family
	\[y^2+Txy=x^3-\frac{T(T-1)}{4}x^2-\frac{36T^2}{T-1728}x-\frac{T^3}{T-1728}\]
	(the $j$-invariant of $\mathcal{E}_t$ is equal to $t$) by Rohrlich~\cite{rohrlich}. Using Birch and Stephens' root number computation for $y^2=x^3-Dx$, Cassels and Schinzel showed that every specialization of the family $y^2=x(x^2-(7+7T^4)^2)$ has root number $-1$~\cite{casselsschinzel} (note that this family has constant $j$-invariant; it is believed that for families of curves over $\QQ$ with non-constant $j$-invariant, each root number occurs infinitely often~\cite[Appendix A]{conradconrad}).
	
	Define
	\[S^{\pm}:=\{t\in\QQ:\mathcal{E}_t\text{ is an elliptic curve and }W(\mathcal{E}_t)=\pm 1\}.\]
	In addition to explicit formulas defining these sets, much work has been done on statistical features of $S^{\pm}$: conditions under which they are nonempty, infinite, or dense in $\RR$, and the respective densities of $S^\pm$ in $\QQ$ or of $S^\pm\cap\ZZ$ in $\ZZ$. An unpublished paper of Helfgott shows that the average (in both senses) is nonzero for a large class of families, but also found families with non-constant $j$-invariant in which the average root number over $\QQ$ is nonzero~\cite{helfgott}. More recently, Desjardins proved infinitude of the sets $S^{\pm}$ for a larger class of families~\cite{desjardins}. Many other authors have also written on the sets $S^{\pm}$~\cite{manduchi,conradconrad,varilly,gouvea}.
	
	\section{The family of elliptic curves}\label{sec:family}
	
	Define the elliptic curve $\mathcal{E}$ over the function field $\QQ(T)$ by
	\begin{align}\label{eq:Ebigdef}
		\mathcal{E}:y^2=x(x+1)(x+T^2).
	\end{align} 
	This curve $\mathcal{E}$ has discriminant
	\begin{align}\label{eq:Edisc}
		\Delta(\mathcal{E})=16 t^4(t-1)^2(t+1)^2,
	\end{align}
	and for all $t\in\QQ\setminus\{0,\pm 1\}$, the specialization of $\mathcal{E}$ at $t$ is $E_t$ (\cref{eq:Edef}).
	
	\begin{lem}\label{lem:MWgroup}
		The Mordell-Weil group $\mathcal{E}(\QQ(T))$ is isomorphic to  $\ZZ/2\ZZ\times \ZZ/4\ZZ$.
	\end{lem}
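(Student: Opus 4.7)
My plan is to sandwich $\mathcal{E}(\QQ(T))$ between an explicit subgroup and an upper bound obtained from specialization to a convenient value of $T$.

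First, I would exhibit a copy of $\ZZ/2\ZZ \times \ZZ/4\ZZ$ inside $\mathcal{E}(\QQ(T))$. The three non-trivial $2$-torsion points are the obvious $(0,0)$, $(-1,0)$, and $(-T^2,0)$. For a point of order $4$, I recall that on a model $y^2 = x(x+a)(x+b)$ the points $P$ with $2P = (0,0)$ have $x$-coordinate satisfying $x(P)^2 = ab$. In our case $ab = T^2$, so I set $P = (T,\,T(T+1))$, which lies on $\mathcal{E}$ because $T(T+1)(T+T^2) = T^2(T+1)^2$. A doubling-formula check (or the classical description of level-$(2,4)$ structure on $y^2 = x(x+a)(x+b)$) then confirms $2P = (0,0)$, so $P$ has order $4$. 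Since $(-1,0) \notin \langle P\rangle$, the subgroup generated by $P$ and $(-1,0)$ is isomorphic to $\ZZ/2\ZZ \times \ZZ/4\ZZ$.

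For the matching upper bound, I would invoke Silverman's specialization theorem: for all but finitely many $t \in \QQ$, the specialization homomorphism $\mathcal{E}(\QQ(T)) \to E_t(\QQ)$ is injective. It therefore suffices to produce a single $t$ outside the finite exceptional set for which $E_t(\QQ)$ is already no larger than $\ZZ/2\ZZ \times \ZZ/4\ZZ$. I would take $t=2$, for which $E_2 : y^2 = x(x+1)(x+4)$ has Mordell-Weil group equal to $\ZZ/2\ZZ \times \ZZ/4\ZZ$ (easily verified in a computer algebra system or read off from LMFDB). This simultaneously kills the possibility of extra torsion (by Mazur the only larger possibility containing $\ZZ/2\ZZ \times \ZZ/4\ZZ$ is $\ZZ/2\ZZ \times \ZZ/8\ZZ$, which is already ruled out by the specialization) and of positive generic rank.

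The main obstacle is arithmetic rather than conceptual: one has to be confident that the chosen specialization $t_0$ is outside Silverman's finite exceptional set and that $E_{t_0}(\QQ)$ really is rank zero with the predicted torsion. If $t=2$ proved unsuitable (for instance if additional care with the height bound were required), any other small $t$ with $E_t$ of rank zero would do, and the torsion half of the argument can be made independent of Silverman by simply noting that reduction modulo two primes of good reduction forces $\mathcal{E}(\QQ(T))_{\mathrm{tors}}$ to embed in a small finite group; only the rank-zero verification genuinely requires Silverman's theorem.
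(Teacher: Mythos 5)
Your overall strategy --- exhibit the explicit $\ZZ/2\ZZ\times\ZZ/4\ZZ$ subgroup, then bound $\mathcal{E}(\QQ(T))$ from above by specializing at a single rational value whose Mordell--Weil group is known --- is exactly the paper's strategy (the paper specializes at $t=6$ rather than $t=2$), and the lower-bound half of your argument is fine. The gap is in the upper bound. Silverman's specialization theorem only guarantees that the set of $t$ at which the specialization homomorphism fails to be injective is finite; in its standard form it is not effective, so you have no way to certify that $t=2$ (or any other particular small value) lies outside the exceptional set. You correctly flag this as ``the main obstacle,'' but you do not resolve it, and switching to another small $t$ does not help since the same uncertainty applies there. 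The paper's resolution is the effective injectivity criterion of Gusi\'c and Tadi\'c, which applies precisely because $\mathcal{E}$ has full $\QQ(T)$-rational $2$-torsion: the map $\mathcal{E}(\QQ(T))\to E_t(\QQ)$ is injective provided $h(t)$ is a non-square in $\QQ$ for each of the sixteen square-free divisors $h(T)$ of $T^2$, $-(T^2-1)$, and $T^2(T^2-1)$, and this is verified at $t=6$. Without this ingredient (or some substitute such as a direct $2$-descent over $\QQ(T)$ or a Shioda--Tate computation of the rank), the claim $\rank\mathcal{E}(\QQ(T))=0$ is not established.

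Your fallback remark about torsion is sound: torsion sections specialize injectively at fibers of good reduction, and by Mazur the only possible enlargement of $\ZZ/2\ZZ\times\ZZ/4\ZZ$ is $\ZZ/2\ZZ\times\ZZ/8\ZZ$, so the torsion half genuinely does not need Silverman. But as you yourself observe, that leaves exactly the rank-zero assertion unproven, and that is the part of the lemma on which the paper's later argument (the density-zero set $S$ in the proof of the main theorem, via \cref{conj:density}) actually relies.
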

	\begin{proof}
		We can exhibit a subgroup isomorphic to $\ZZ/2\ZZ\times \ZZ/4\ZZ$, consisting of the eight points
		\begin{align}\label{eq:torsionpoints}
			\begin{array}{cccc}
				O, & (-T,\,-T(T-1)) & (0,0) & (-T,\,T(T-1))\\
				(-1,0) & (T,\,-T(T+1)) & (-T^2,0) & (T,\,T(T+1)).
			\end{array}
		\end{align}
		Since $\mathcal{E}$ has full $\QQ(T)$-rational $2$-torsion, we can apply a result due to Gusi\'c and Tadi\'c (\cite[Theorem 1.1]{gusic}) to show that there are no other $\QQ(T)$-points. The non-constant square-free divisors $h(T)\in\ZZ[T]$ of $T^2$, $-(T^2-1)$, and $T^2(T^2-1)$ are all of the form
		\[h(T):=(-1)^{\epsilon_1}T^{\epsilon_2}(T+1)^{\epsilon_3}(T-1)^{\epsilon_4}\]
		for $\epsilon_i\in \{0,1\}$. For all sixteen of these polynomials $h(T)$, $h(6)$ is not a square in $\QQ$, and so the specialization map $\mathcal{E}(\QQ(T))\to E_6(\QQ)$ is injective. Since $E_6(\QQ)\cong \ZZ/2\ZZ\times \ZZ/4\ZZ$, we can conclude that $\mathcal{E}(\QQ(T))\cong\ZZ/2\ZZ\times \ZZ/4\ZZ$.
	\end{proof}

	In fact, any elliptic curve $E/\QQ$ with $\ZZ/2\ZZ\times\ZZ/4\ZZ\leq E(\QQ)$ satisfies $E\cong E_t$ for some $t$, as is shown in Chapter 4, Example (6.2) of~\cite{husemoller}. For any $t\in \QQ\setminus\{0,\pm 1\}$, we define $A_t\leq E_t(\QQ)$ to be the image of $\mathcal{E}(\QQ(T))$ under the specialization map; that is,
	\begin{align}\label{eq:Adef}
		\begin{aligned}
			A_t&:=\{O,\,(0,0),\,(-1,0),\,(-t^2,0),\,(-t,\,\pm t(t-1)),\,(t,\,\pm t(t+1))\}\\
			&\cong \ZZ/2\ZZ\times \ZZ/4\ZZ.
		\end{aligned}
	\end{align}

	\section{Rational right triangles}\label{sec:ratright}
	
	Fix $t\in\QQ_{>0}$, and let $\mathcal{S}_t$ denote the set of triples $(a,b,c)$ satisfying \cref{eq:righttri}. We would like to determine how the size of $\mathcal{S}_t$ depends on $t$. If $t=1$, an infinite collection of solutions is given by $a=\frac{1-r^2}{2r}$, $b=c=\frac{1+r^2}{2r}$ for $r\in\QQ\cap (0,1)$.
	
	When $t\neq 1$, solutions to \cref{eq:righttri} arise from rational points on $E_t$. Recall that $A_t$ was defined to be the isomorphic copy of $\ZZ/2\ZZ\times\ZZ/4\ZZ$ in $E_t(\QQ)$.

	\begin{lem}\label{prop:eighttoone}
		Fix $t\in\QQ_{>0}\setminus\{1\}$. The function
		\[\Phi:(x,y)\mapsto \left(\left|\frac{x^2-t^2}{2y}\right|,\,\left|\frac{x^2+2x+t^2}{2y}\right|,\,\left|\frac{x^2+2t^2x+t^2}{2y}\right|\right)\]
		is an eight-to-one map from $E_t(\QQ)-A_t$ to $\mathcal{S}_t$.
	\end{lem}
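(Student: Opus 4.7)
I plan to first verify $\Phi$ lands in $\mathcal{S}_t$, then show every target $(a,b,c) \in \mathcal{S}_t$ has exactly one preimage for each sign vector $(\epsilon_1, \epsilon_2, \epsilon_3) \in \{\pm 1\}^3$. The algebraic identities $(b-a)(b+a) = 1$ and $(c-a)(c+a) = t^2$ reduce to the curve equation $y^2 = x(x+1)(x+t^2)$ upon expansion. They give $b^2 = 1 + a^2 > 0$ and $c^2 = t^2 + a^2 > 0$, so $b, c > 0$ whenever $y \neq 0$, while $a = 0$ forces $x = \pm t$. The four points with $x = \pm t$ together with the three $2$-torsion points (where $y = 0$) exhaust $A_t$, so $\Phi$ is a well-defined map $E_t(\QQ) \setminus A_t \to \mathcal{S}_t$.

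\textbf{Constructing preimages.} Fix $(a, b, c) \in \mathcal{S}_t$ and a sign vector $\epsilon$; set $\tilde{a} = \epsilon_1 a$, $\tilde{b} = \epsilon_2 b$, $\tilde{c} = \epsilon_3 c$ and $u = \tilde{b} - \tilde{a}$, $v = \tilde{c} - \tilde{a}$. Subtracting the three defining relations of $\Phi$ pairwise yields the linear system $yu = x+t^2$ and $yv = t^2(x+1)$, with unique solution
\[y = \frac{t^2(t^2-1)}{t^2 u - v}, \qquad x = \frac{t^2(v-u)}{t^2 u - v}.\]
The denominator is nonzero: if $v = t^2 u$, the identities $\tilde{b}+\tilde{a} = 1/u$ and $\tilde{c}+\tilde{a} = t^2/v$ (from $\tilde{b}^2 - \tilde{a}^2 = 1$ and $\tilde{c}^2 - \tilde{a}^2 = t^2$) would force $\tilde{b} = \tilde{c}$, contradicting $1 - t^2 \neq 0$. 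The eight sign vectors produce eight distinct points since $(\tilde{a}, \tilde{b}, \tilde{c})$ is recovered from $(x, y)$ by the defining formulas, and each lies in $E_t(\QQ) \setminus A_t$ because $\tilde{a} \neq 0$ forces $x \neq \pm t$.

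\textbf{Main obstacle.} The key step is verifying $y^2 = x(x+1)(x+t^2)$ for the constructed $(x,y)$. The relations $\tilde{b}^2 - \tilde{a}^2 = 1$ and $\tilde{c}^2 - \tilde{a}^2 = t^2$ yield $\tilde{a} = (1-u^2)/(2u) = (t^2 - v^2)/(2v)$; equating these gives the crucial identity $t^2 u - v = uv(v-u)$. A direct computation shows $x + 1 = (t^2 - 1)v/(t^2 u - v)$ and $x + t^2 = t^2 u(t^2-1)/(t^2 u - v)$, so
\[x(x+1)(x+t^2) = \frac{t^4 uv(v-u)(t^2-1)^2}{(t^2 u - v)^3} = \frac{t^4(t^2-1)^2}{(t^2 u - v)^2} = y^2,\]
where the second equality applies the crucial identity. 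Conversely, any preimage of $(a,b,c)$ determines a sign vector through the defining formulas, so the eight sign choices exhaust the fiber.
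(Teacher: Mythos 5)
Your proof is correct, but it takes a genuinely different route from the paper. The paper factors $\Phi$ through an auxiliary plane curve $C_t:(w^2-r^2)(2s)=t(w^2-s^2)(2r)$ via an explicit isomorphism $\Phi_1:E_t\to C_t$, obtains surjectivity onto $\mathcal{S}_t$ from the classical parametrization of Pythagorean triples, and realizes the fibers as orbits of a free action of an order-$8$ group of involutions $(r,s,1)\mapsto(-\tfrac1r,s,1)$, $(r,s,1)\mapsto(r,-\tfrac1s,1)$, $(r,s,1)\mapsto(-r,-s,1)$ on $C_t'$. You instead work directly on $E_t$: you index the fiber over $(a,b,c)$ by the eight sign vectors $(\epsilon_1,\epsilon_2,\epsilon_3)$, solve the two linear relations $yu=x+t^2$, $yv=t^2(x+1)$ for a unique candidate point, and verify the Weierstrass equation via the identity $t^2u-v=uv(v-u)$. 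Your computations check out (in particular $x+1=\frac{(t^2-1)v}{t^2u-v}$, $x+t^2=\frac{t^2u(t^2-1)}{t^2u-v}$, and the nonvanishing of $t^2u-v$). The paper's detour through $C_t$ makes the eight-to-one structure conceptually transparent as a free group action and reuses the Pythagorean parametrization; your argument is more computational but self-contained and exhibits the fiber completely explicitly. One step you assert rather than prove is that the constructed $(x,y)$ actually satisfies $\Phi(x,y)=(a,b,c)$ (equivalently, that $(\tilde a,\tilde b,\tilde c)$ is ``recovered'' from $(x,y)$); this is needed both for surjectivity and for distinctness of the eight points. It does follow from what you have: setting $A=\frac{x^2-t^2}{2y}$, $B=\frac{x^2+2x+t^2}{2y}$, $C=\frac{x^2+2t^2x+t^2}{2y}$, the curve equation gives $B^2-A^2=1$ and $C^2-A^2=t^2$, while the linear relations give $B-A=u=\tilde b-\tilde a$ and $C-A=v=\tilde c-\tilde a$; since $u,v\neq 0$ one may divide to get $B+A=\tilde b+\tilde a$ and $C+A=\tilde c+\tilde a$, whence $(A,B,C)=(\tilde a,\tilde b,\tilde c)$. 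Spelling this out would close the only gap in an otherwise complete alternative proof.
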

	\begin{proof}
		Let $\PP^2$ be the projective plane over $\QQ$ with coordinates $(r,s,w)$. Define a curve $C_t$ in $\PP^2$ by
		\begin{align}
			C_t:(w^2-r^2)(2s)=t(w^2-s^2)(2r).
		\end{align}
		There is an isomorphism $\Phi_1:E_t\to C_t$ defined by 
		\begin{align*}
			\Phi_1(x,y)&=(x+t^2,\,t(x+1),\,y),
		\end{align*}
		sending the identity of $E_t(\QQ)$ to $(0,0,1)$. The locus of points in $C_t$ with $rsw(w^2-r^2)=0$ consists of the eight points
		\begin{align*}
			\begin{array}{cccc}
				(0,0,1), & (-1,1,1), & (t,1,0), & (1,-1,1),\\
				(1,0,0), & (-1,-1,1), & (0,1,0), & (1,1,1),
			\end{array}
		\end{align*}
		which is exactly the image of $A_t$ under $\Phi_1$. Hence, if we let $C_t'$ be the affine open subset of $C_t$ defined by $rsw(w^2-r^2)\neq 0$, then $\Phi_1$ defines a bijection $E_t(\QQ)\setminus A_t\to C_t'(\QQ)$. 

		Now define a function $\Phi_2:C_t'(\QQ)\to \QQ^3$ by
		\begin{align}\label{eq:curvetotris}
		\Phi_2(r,s,w):=\left(\left|\frac{w^2-r^2}{2rw}\right|,\,\left|\frac{w^2+r^2}{2rw}\right|,\,t\left|\frac{w^2+s^2}{2sw}\right|\right).
		\end{align}
		If $(a,b,c)=\Phi_2(r,s,w)$, then $a,b,c>0$, $1+a^2=b^2$, and by the defining equation for $C_t$,
		\begin{align*}
			t^2+a^2&=t^2+\left(\frac{w^2-r^2}{2rw}\right)^2=t^2+t^2\left(\frac{w^2-s^2}{2sw}\right)^2=c^2.
		\end{align*}
		Thus the image of $\Phi_2$ lands in $\mathcal{S}_t$. Given any $(a,b,c)\in\mathcal{S}_t$, by the classical parametrization of Pythagorean triples there exist $r,s\in\QQ\setminus\{0,\pm 1\}$ such that 
		\begin{align*}
			(a,b)=\left(\frac{1-r^2}{2r},\,\frac{1+r^2}{2r}\right),\qquad (a,c)=\left(t\frac{1-s^2}{2s},\,t\frac{1+s^2}{2s}\right).
		\end{align*}
		Thus $(r,s,1)$ is a point in $C_t'(\QQ)$ mapping to $(a,b,c)$, so $\Phi_2:C_t'(\QQ)\to \mathcal{S}_t$ is surjective.
		
		Two points $(r,s,1),(r',s',1)\in C_t'(\QQ)$ map to the same element of $\mathcal{S}_t$ if and only if $r'\in\{r,-r,\frac{1}{r},-\frac{1}{r}\}$ and $s'\in\{s,-s,\frac{1}{s},-\frac{1}{s}\}$. The points $(r,s,1)$ and $(-r,s,1)$ are never both in $C_t'(\QQ)$, but there is a group $G$ of order $8$ generated by the involutions
		\[(r,s,1)\mapsto (-\tfrac1r,s,1),\qquad  (r,s,1)\mapsto (r,-\tfrac1s,1),\qquad  (r,s,1)\mapsto (-r,-s,1).\]
		acting freely on $C_t'$. Hence the fibers of $\Phi_2$ are exactly the orbits of $G$, so $\Phi_2$ is eight-to-one. 
		
		Finally, $\Phi:=\Phi_2\circ\Phi_1$ is eight-to-one because $\Phi_1:E_t(\QQ)\setminus A\to C_t'(\QQ)$ is a bijection and $\Phi_2:C_t'(\QQ)\to\mathcal{S}_t$ is eight-to-one. 
	\end{proof}

	\Cref{prop:eighttoone} shows that for $t\neq 1$, $\mathcal{S}_t$ is nonempty if and only if $E_t(\QQ)\setminus A_t$ is nonempty. If $E_t(\QQ)$ has positive rank, then $\mathcal{S}_t$ is infinite. It is also possible for $E_t(\QQ)$ to have a torsion point outside of $A_t$. The values of $t$ for which this occurs are classified by the following result. 

	\begin{lem}\label{prop:extrators}
		Let $t\in\QQ_{>0}\setminus \{1\}$. The following are equivalent:
		\begin{enumerate}[label=(\alph*)]
			\item The set $E_t(\QQ)_{\text{tors}}\setminus A_t$ is non-empty.
			\item $E_t(\QQ)_{\text{tors}}\cong \ZZ/2\ZZ\times \ZZ/8\ZZ$.
			\item There exists a solution $(a,b,c)\in\mathcal{S}_t$ such that $a(1,a,b)=(a,t,c)$ (i.e.\ the two right triangles are similar).
			\item $t$ and $t+1$ are both squares in $\QQ$.
			\item $t=\left(\frac{1-r^2}{2r}\right)^2$ for some $r\in\QQ\setminus \{0,\pm 1\}$.
		\end{enumerate}
	\end{lem}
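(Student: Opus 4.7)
The plan is to prove the five conditions equivalent by organizing them into a web in which (a)$\Leftrightarrow$(b), (c)$\Leftrightarrow$(d), and (d)$\Leftrightarrow$(e) are either group-theoretic or purely algebraic facts, while the real arithmetic content lies in (a)$\Leftrightarrow$(d), which I would prove by analyzing the $\QQ$-rational preimages of the order-$4$ points of $A_t$ under the doubling map on $E_t$.

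The easy implications I would dispatch first. For (a)$\Leftrightarrow$(b), Mazur's theorem shows that the only torsion subgroup of an elliptic curve over $\QQ$ strictly containing $\ZZ/2\ZZ\times\ZZ/4\ZZ$ is $\ZZ/2\ZZ\times\ZZ/8\ZZ$, and $A_t\cong\ZZ/2\ZZ\times\ZZ/4\ZZ$ sits inside $E_t(\QQ)_{\text{tors}}$ by \cref{eq:Adef}. For (d)$\Leftrightarrow$(e), writing $t=u^2$ and $t+1=v^2$ with $u,v>0$ gives $(v-u)(v+u)=1$, so $r:=v-u$ yields $u=(1-r^2)/(2r)$; conversely, $t=((1-r^2)/(2r))^2$ forces $t+1=((1+r^2)/(2r))^2$. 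For (c)$\Leftrightarrow$(d), the similarity condition $a\cdot(1,a,b)=(a,t,c)$ says $a^2=t$ and $ab=c$, whence $t+1=1+a^2=b^2$; conversely, given (d), the triple $(\sqrt{t},\sqrt{t+1},\sqrt{t(t+1)})$ lies in $\mathcal{S}_t$ and witnesses (c).

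The main step is (a)$\Leftrightarrow$(d). I would start from the doubling formula for $E_t$: since the Weierstrass model $y^2=x^3+(t^2+1)x^2+t^2x$ has vanishing constant term, the standard formula collapses to
\[x(2P)=\frac{(x(P)^2-t^2)^2}{4y(P)^2}\geq 0\qquad\text{whenever }y(P)\neq 0.\]
Hence if $Q\in E_t(\QQ)$ has order $8$, then $2Q$ is an order-$4$ point of $A_t$ with nonnegative $x$-coordinate, forcing $2Q\in\{(t,\pm t(t+1))\}$. Setting $x(2Q)=t$ yields a quartic in $x=x(Q)$ which, via a direct difference-of-fourth-powers rearrangement, collapses to the clean identity
\[(x-t)^4=4t(t+1)^2x^2.\]
Taking square roots gives $(x-t)^2=\pm 2(t+1)x\sqrt{t}$, so for $x\in\QQ^\times$ we must have $\sqrt{t}\in\QQ$; writing $u=\sqrt{t}$, the resulting quadratic in $x$ has discriminant $4u^2(t+1)(u\pm 1)^2$, so rationality of $x$ additionally forces $\sqrt{t+1}\in\QQ$. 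This proves (a)$\Rightarrow$(d). Running the argument backwards, when both $\sqrt{t}$ and $\sqrt{t+1}$ are rational one obtains explicit rational values of $x$, after which $y=\pm(x^2-t^2)/(2u)$ is also rational; the resulting point $Q$ is not $2$-torsion (as $y\neq 0$) and doubles to an order-$4$ point, so $Q$ has order exactly $8$.

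The main technical obstacle is the algebraic collapse of $(x^2-t^2)^2-4tx(x+1)(x+t^2)$ into $(x-t)^4-4t(t+1)^2x^2$, which is not visually obvious but follows from expanding $(x-t)^4$ and collecting the residual $x^2$ coefficient. A minor but essential secondary point is to rule out $2Q=(-t,\pm t(t-1))$ as a possibility, which is handled uniformly by the nonnegativity of $x(2P)$ in the displayed doubling formula.
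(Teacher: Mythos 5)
Your proof is correct, and it shares the paper's one essential computation --- the duplication formula on $E_t$ collapsing to $x(2P)=\left(\tfrac{x^2-t^2}{2y}\right)^2$, which forces $x(2Q)\geq 0$ and hence $2Q=(t,\pm t(t+1))$ --- but the architecture around it is genuinely different. The paper proves a single cycle (a)$\Rightarrow$(b)$\Rightarrow$(c)$\Rightarrow$(d)$\Rightarrow$(e)$\Rightarrow$(a): it reads off $t=a^2$ by pushing the order-$8$ point through the eight-to-one map $\Phi$ of \cref{prop:eighttoone} to land in $\mathcal{S}_t$ (giving (c), then (d) from the Pythagorean relation $b^2=1+a^2$), and it closes the loop by exhibiting an explicit point of order $8$ for (e)$\Rightarrow$(a), stated without verification. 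You instead isolate (a)$\Leftrightarrow$(d) as the arithmetic core and handle (c) and (e) by elementary algebra, never invoking $\Phi$. I checked your two computational claims and both are right: $(x^2-t^2)^2-4tx(x+1)(x+t^2)=(x-t)^4-4t(t+1)^2x^2$, and the quadratic in $x$ obtained after writing $u=\sqrt t$ has discriminant $4u^2(t+1)(u\pm 1)^2$. Your route has the advantage that the necessity of \emph{both} squares falls out of one chain of algebra (the quartic identity gives $\sqrt t\in\QQ$, the discriminant gives $\sqrt{t+1}\in\QQ$), and that (d)$\Rightarrow$(a) constructs the order-$8$ point by solving that quadratic rather than pulling it from thin air; the paper's route is shorter where it can lean on \cref{prop:eighttoone}. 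Two points you should make explicit in a final write-up: first, that the $4$-torsion subgroup of $\ZZ/2\ZZ\times\ZZ/8\ZZ$ has exactly eight elements and therefore coincides with $A_t$, so $2Q$ really is one of the four order-$4$ points of $A_t$ (the paper sidesteps this via the identity component of $E_t(\RR)$); second, that the roots of your quadratic are never $0$ or $\pm t$ (substituting these into $(x-t)^4=4t(t+1)^2x^2$ forces $t\in\{0,-1,1\}$), so the constructed point has $y\neq 0$ and honestly has order $8$.
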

	\begin{proof}
		\textbf{(a) implies (b).} By Mazur's classification of torsion subgroups of Mordell-Weil groups of elliptic curves over $\QQ$~\cite{mazur}, the only possible torsion subgroup strictly containing $\ZZ/2\ZZ\times\ZZ/4\ZZ$ is $\ZZ/2\ZZ\times\ZZ/8\ZZ$.
		\vspace{3pt}
		
		\noindent\textbf{(b) implies (c).} Suppose $E_t(\QQ)$ has a point $(u,v)$ of order $8$. Then $2(u,v)$ is a point of order $4$ on the identity component of $E_t(\RR)$, so without loss of generality we can assume $2(u,v)=(t,t(t+1))$. By the duplication law on $E_t(\QQ)$~\cite[Group Law Algorithm 2.3(c)]{silverman}, we can write
		\begin{align*}
			t&=\left(\frac{3u^2+2(1+t^2)u+t^2}{2v}\right)^2-(1+t^2)-2u\\
			&=\frac{(3u^2+2(1+t^2)u+t^2)^2-4u(u+1)(u+t^2)(2u+t^2+1)}{4u(u+1)(u+t^2)}\\
			&=\left(\frac{u^2-t^2}{2v}\right)^2.
		\end{align*}
		By \cref{prop:eighttoone}, we have $(a,b,c):=\Phi(u,v)\in\mathcal{S}_t$ with $a=\left|\frac{u^2-t^2}{2v}\right|$. Hence we have $t=a^2$, and
		\[c^2=t^2+a^2=a^2(a^2+1)=a^2b^2,\]
		so that $a(1,a,b)=(a,t,c)$.
		\vspace{3pt}
		
		\noindent\textbf{(c) implies (d).} Given a solution $(a,b,c)\in\mathcal{S}_t$ with $a(1,a,b)=(a,t,c)$, we have $a^2=t$, and therefore $b^2=1+t$.
		\vspace{3pt}
		
		\noindent\textbf{(d) implies (e).} Suppose $t=a^2$ and $1+t=b^2$ for some $a,b\in\QQ$. By a classical parametrization of Pythagorean triples, we must have $a=\frac{1-r^2}{2r}$ for some $r\in \QQ\setminus \{0,\pm 1\}$, and therefore $t=\left(\frac{1-r^2}{2r}\right)^2$.
		\vspace{3pt}
		
		\noindent\textbf{(e) implies (a).} If $t=\left(\frac{1-r^2}{2r}\right)^2$, then $E_t(\QQ)$ contains a point
		\[(u,v)=\left(\frac{(1-r)(1+r)^3}{4r^3},\,\frac{(1-r)(1+r)^3(1+r^2)(r^2-2r-1)}{16r^5}\right)\]
		of order 8.
	\end{proof}

	\Cref{prop:classify} is an immediate consequence of \cref{prop:eighttoone,prop:extrators}.
	
\section{Computing the root number}\label{sec:computeroot}

	\begin{lem}\label{lem:rootnum}
		Let $t\in\QQ\setminus \{0,\pm 1\}$. Set $w_2(t)=1$ if $\ord_2(t)=\pm 2$ or $\ord_2(t^2-1)=3$, and $w_2(t)=-1$ otherwise. The root number of $E_t$ (\cref{eq:Edef}) is given by
		\begin{align}\label{eq:rootnum}
			W(E_t)=-w_2(t)\left(\prod_{\substack{p\text{ odd}\\\ord_p(t)\neq 0}}(-1)\right)\left(\prod_{\substack{p\equiv 1\; \mathrm{mod} \; 4\\\ord_p(t^2-1)>0}}(-1)\right).
		\end{align}
	\end{lem}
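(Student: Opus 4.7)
The plan is to use the standard decomposition $W(E_t) = W_\infty(E_t)\cdot\prod_p W_p(E_t)$ into local root numbers, where $W_\infty(E_t) = -1$ always and $W_p(E_t) = 1$ whenever $E_t$ has good reduction at $p$. Since the discriminant of \cref{eq:Ebigdef} is $16\,T^4(T-1)^2(T+1)^2$, the primes of potentially bad reduction for $E_t$ are exactly those dividing $2t(t^2-1)$, so the task reduces to computing $W_p(E_t)$ at each such prime.

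For an odd prime $p$ with $\ord_p(t)\neq 0$, I would show that $E_t$ has split multiplicative reduction at $p$, giving $W_p(E_t)=-1$. When $\ord_p(t)>0$ the given model is integral at $p$ and reduces to $y^2=x^2(x+1)$, with a node at $(0,0)$ whose tangent cone $y^2-x^2$ splits over $\mathbb{F}_p$. When $\ord_p(t)<0$, setting $k=-\ord_p(t)$ and $u=tp^k$ and applying $(x,y)\mapsto(x/p^{2k},y/p^{3k})$ produces an integral model $Y^2=X(X+p^{2k})(X+u^2)$; reducing mod $p$ gives $Y^2=X^2(X+u^2)$, which again has a split node at the origin. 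Either way the local root number is $-1$.

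For an odd prime $p$ with $\ord_p(t)=0$ and $\ord_p(t^2-1)>0$, exactly one of $t\pm 1$ is divisible by $p$ (both cannot be, since $p$ is odd), hence $t^2\equiv 1\pmod p$ and $E_t$ reduces to $y^2=x(x+1)^2$. Expanding at the singular point via $X:=x+1$ yields $y^2+X^2=X^3$, so the tangent cone $y^2+X^2=0$ splits over $\mathbb{F}_p$ iff $-1$ is a quadratic residue mod $p$, i.e.\ iff $p\equiv 1\pmod 4$. Thus $W_p(E_t)=-1$ when $p\equiv 1\pmod 4$ and $W_p(E_t)=+1$ when $p\equiv 3\pmod 4$. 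Combined with $W_\infty(E_t)=-1$, this recovers every factor of \cref{eq:rootnum} except $w_2(t)$.

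The main obstacle is the prime $p=2$, where the local root number depends in a subtle way on the $2$-adic invariants of a minimal model and no uniform closed form exists. I would carry out a case analysis on the profile $(\ord_2(t),\ord_2(t^2-1))$: in each case, pass to a minimal integral Weierstrass model (rescaling $(x,y)$ by powers of $2$ when $\ord_2(t)<0$, and translating $x$ or $y$ as necessary to further minimize $\ord_2(c_4),\ord_2(c_6),\ord_2(\Delta)$), compute $c_4,c_6,\Delta$ modulo the appropriate powers of $2$, and read $W_2(E_t)$ from the tables of Halberstadt as reformulated by Rizzo. The claim to be verified is that this computation yields $W_2(E_t)=+1$ precisely when $\ord_2(t)=\pm 2$ or $\ord_2(t^2-1)=3$, i.e.\ precisely when $w_2(t)=1$. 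Collecting all local contributions then gives \cref{eq:rootnum}.
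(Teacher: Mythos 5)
Your overall strategy is exactly the paper's: factor $W(E_t)=W_\infty(E_t)\prod_p W_p(E_t)$, use $W_\infty=-1$, and determine each $W_p$ from the reduction type. Your treatment of the odd primes is correct and essentially identical in content to the paper's \cref{lem:rootnumodd}; you decide split versus non-split multiplicative reduction by factoring the tangent cone at the node, while the paper tests whether $b_2$ is a quadratic residue mod $p$, but these are the same criterion. In particular your conclusion that the primes with $\ord_p(t)\neq 0$ always give a split node, and that the primes with $\ord_p(t^2-1)>0$ give a split node exactly when $-1$ is a square mod $p$, matches the paper.

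The genuine gap is at $p=2$, which is where essentially all of the difficulty of this lemma lives, and where you stop at a statement of intent (``I would carry out a case analysis\dots The claim to be verified is\dots''). The paper's \cref{lem:rootnum2} is not a routine table lookup: one must first \emph{prove} that the chosen integral model is minimal at $2$ (in Case II of the paper this requires showing no transformation $(x,y)\mapsto(\tfrac14(x-r),\tfrac18(y-s(x-r)-4t))$ yields an integral model, via a mod-$8$ analysis of the coefficients), then split into the subcases $\ord_2(uv)\geq 2$ versus $\ord_2(uv)\leq 1$, and within the latter track $(\ord_2(c_4),\ord_2(c_6),\ord_2(\Delta))$ and the residues $c_4'\bmod 32$, $c_6'\bmod 4$ through four further subcases of $\ord_2(u^2-v^2)$; one of these falls outside Halberstadt's multiplicative/small-valuation range and requires Connell's criterion for additive reduction with $\ord_2(j)<0$. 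Since the entire content of the factor $w_2(t)$ --- the assertion that $W_2(E_t)=+1$ precisely when $\ord_2(t)=\pm2$ or $\ord_2(t^2-1)=3$ --- rests on this computation, your proposal as written establishes only the odd part of \cref{eq:rootnum} and leaves the $2$-adic part unproved. To complete the argument you would need to actually carry out the minimality argument and the case-by-case evaluation against the Halberstadt--Rizzo tables.
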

	A version of this formula appears in~\cite[Proposition 11]{xarles}; we include a computation of this result here for completeness.
	
	The following results are relatively standard; we provide a reference for each. Note that we do not need a minimal model for $E$.
	\begin{lem}\label{lem:rootnumrefs}
		Let $E$ be an elliptic curve over $\QQ$ with an integral model.
		\begin{enumerate}[label=(\roman*)]
			\item $W_\infty(E)=-1$.~{\cite[Proposition 1]{rohrlich}} 
			\item If $E$ has good reduction at $p$, then $W_p(E)=1$.~{\cite[Proposition 2(iv)]{rohrlich}} 
			\item If $E$ has split multiplicative reduction at $p$ then $W_p(E)=-1$.~{\cite[Proposition 3(iii)]{rohrlich}}
			\item If $E$ has nonsplit multiplicative reduction at $p$ then $W_p(E)=1$.~{\cite[Proposition 3(iii)]{rohrlich}}
		\end{enumerate}
	\end{lem}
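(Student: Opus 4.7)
The plan is to observe that each of the four conclusions is a statement about the elliptic curve $E/\QQ_p$ (with $v=\infty$ in the first case) that depends only on the isomorphism class over the local field, not on the choice of integral Weierstrass model. Both the reduction type (good, split multiplicative, nonsplit multiplicative, or additive) and the local root number $W_v(E)$ are well-defined invariants of $E_{\QQ_v}$. Consequently, although Rohrlich's formulas are typically stated for a model that is minimal at the relevant prime, we may replace our given integral model by one that is minimal at $p$ without changing either the hypothesis or the conclusion of (ii)--(iv); this is the content of the remark that no global minimality assumption is needed.

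With that reduction in place, each item is a direct citation. For (i), Rohrlich's Proposition 1 computes the archimedean local root number from the local epsilon factor of the associated Weil-Deligne representation at $\infty$ and finds it equals $-1$ for every elliptic curve over $\RR$. For (ii), at a prime of good reduction the $\ell$-adic Tate module yields an unramified Weil-Deligne representation with trivial monodromy, whose local epsilon factor is $1$; this is Rohrlich's Proposition 2(iv). For (iii) and (iv), at a prime of multiplicative reduction the Weil-Deligne representation is a twist of the Steinberg representation by the unramified character distinguishing split from nonsplit reduction, and Rohrlich's Proposition 3(iii) evaluates the resulting local root number to be $-1$ in the split case and $+1$ in the nonsplit case.

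Since all four claims reduce to quoting standard formulas for local epsilon factors of Weil-Deligne representations in the respective cases, there is no real obstacle beyond the bookkeeping observation about model-independence. The only delicate point worth stating explicitly in the write-up is that the paper's reduction-type recipe (in terms of whether $p\mid\Delta$, $p\nmid b_2$, etc.) applied to a non-minimal model could produce a wrong answer, and so throughout \cref{lem:rootnumrefs} the phrase ``$E$ has good/split/nonsplit reduction at $p$'' must be interpreted as the intrinsic reduction type of $E/\QQ_p$ rather than as a property of the chosen integral model.
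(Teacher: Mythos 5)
Your proposal is correct and takes essentially the same route as the paper, which gives no argument beyond the citations to Rohrlich's Propositions 1, 2(iv), and 3(iii). Your added justification that the reduction type and the local root number are intrinsic invariants of $E$ over $\QQ_v$, so that minimality of the chosen integral model is immaterial, is precisely the content of the paper's parenthetical remark preceding the lemma.
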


	Using these results we can compute the local root numbers $W_p(E_t)$ at all odd primes.
	
	\begin{lem}\label{lem:rootnumodd}
		If $p$ is odd, then the local root number of $E_t$ at $p$ is given by
		\[W_p(E_t)=\left\{\begin{array}{rl}
			-1, & \ord_p(t)\neq 0,\\
			-1, & p\equiv 1\pmod 4\text{ and }\ord_p(t^2-1)>0,\\
			1, & \text{otherwise.}
		\end{array}\right.\]
	\end{lem}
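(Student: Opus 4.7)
The plan is to write down an explicit integral Weierstrass model for $E_t$, read off its reduction type at each odd prime $p$ by inspection, and apply the appropriate part of \cref{lem:rootnumrefs}. Writing $t = a/b$ in lowest terms, the substitution $(x,y) \mapsto (b^{-2}x,\,b^{-3}y)$ carries $E_t$ to the integral model
\[E_t'\colon y^2 = x(x+a^2)(x+b^2),\qquad \Delta(E_t') = 16\,a^4 b^4(a^2-b^2)^2.\]
Because $\gcd(a,b)=1$, the three roots $0,-a^2,-b^2$ of the cubic can coincide only in pairs modulo any odd $p$, never all three, so $\overline{E_t'}$ is either smooth or has exactly one node over $\FF_p$. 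Hence $E_t$ has good or multiplicative reduction at every odd prime, and no analysis of additive reduction is required.

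I then split into three mutually exclusive cases. If $p \nmid ab(a^2-b^2)$, the cubic has distinct roots mod $p$, so \cref{lem:rootnumrefs}(ii) gives $W_p(E_t) = 1$. If $p \mid ab$, then exactly one of $a, b$ is divisible by $p$ (since $\gcd(a,b)=1$); the reduction takes the shape $y^2 = x^2(x+c^2) \pmod p$ for some $c \in \FF_p^\times$, whose tangent cone at the node $(0,0)$ is $y^2 = c^2 x^2$ with $\FF_p$-rational tangent directions $y = \pm c x$, giving split multiplicative reduction and $W_p(E_t) = -1$ by \cref{lem:rootnumrefs}(iii). If $p \nmid ab$ but $p \mid a^2 - b^2$, then $a^2 \equiv b^2 \pmod p$ and the reduction is $y^2 = x(x+a^2)^2$; translating $x \mapsto x - a^2$ to move the node to the origin produces tangent cone $y^2 = -a^2 x^2$, and the tangent directions $y = \pm a\sqrt{-1}\,x$ lie in $\FF_p$ iff $-1$ is a quadratic residue mod $p$, iff $p \equiv 1 \pmod 4$. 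So \cref{lem:rootnumrefs}(iii),(iv) give $W_p(E_t) = -1$ or $1$ accordingly.

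To finish, I translate the divisibility conditions on $a,b$ back to valuations of $t$: the condition $p \mid ab$ is precisely $\ord_p(t) \neq 0$, and when $p \nmid ab$ the condition $p \mid a^2 - b^2$ is precisely $\ord_p(t^2-1) > 0$; a direct check shows $\ord_p(t) \neq 0$ forces $\ord_p(t^2-1) \leq 0$, so the three cases partition the odd primes as required. The step demanding the most care is the split-vs-nonsplit determination in case 3, where the key observation is that the tangent directions at the node involve $\sqrt{-1}$ and so are rational over $\FF_p$ only when $p \equiv 1 \pmod 4$; everything else is routine bookkeeping once the integral model $E_t'$ is in hand. (One may also verify that $v_p(c_4(E_t')) = 0$ in all bad-reduction cases, so that $E_t'$ is in fact minimal at every odd $p$, though this is not logically needed since the apparent reduction type of an integral model agrees with the true one whenever the reduction is at worst nodal.)
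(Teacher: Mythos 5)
Your proposal is correct and follows essentially the same route as the paper: the same integral model $y^2=x(x+a^2)(x+b^2)$, the same three-way case split on $p\mid ab$, $p\mid a^2-b^2$, or neither, and the same translation of the node to the origin in the third case. The only cosmetic difference is that you detect split versus nonsplit reduction via the rationality of the tangent directions at the node, whereas the paper phrases the identical computation as a quadratic-residue test on $b_2$; your closing remarks on minimality and on the mutual exclusivity of the two bad-reduction conditions match observations the paper also makes.
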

	\begin{proof}
		Let $t=\frac uv$ for some coprime integers $u,v$. The transformation $(x,y)\mapsto (v^2x,v^3y)$ sends $E_t$ to an integral Weierstrass model 
		\[E_{u,v}:y^2=x(x+u^2)(x+v^2)=x^3+(u^2+v^2)x^2+u^2v^2x.\]
		This model has discriminant $\Delta=16u^4v^4(u^2-v^2)^2$. The root number at $p$ can be computed using \cref{lem:rootnumrefs}. If $p\nmid \Delta$ then $E_{u,v}$ has good reduction at $p$, and so $W_p(E_t)=1$.
		
		If $p\mid uv$ (equivalently, if $\ord_p(t)\neq 0$), then $p\nmid u^2+v^2$, and so $(0,0)$ is a node on $\overline{E_{u,v}}$. Since $b_2=4(u^2+v^2)$ is a quadratic residue mod $p$, $E_{u,v}$ has split multiplicative reduction at $p$. Thus $W_p(E_t)=-1$.
		
		If $p\mid (u^2-v^2)$ (equivalently, if $\ord_p(t^2-1)> 0$), then the transformation $x\mapsto x+v^2$ takes $\overline{E_{u,v}}$ to the curve over $\FF_p$ given by
		\[y^2=(x-v^2)(x+u^2-v^2)x=x^2(x-v^2),\]
		on which $(0,0)$ is a node. If $p\equiv 1\pmod 4$, then $b_2=-4v^2$ is a square mod $p$, and so $E_{u,v}$ has split multiplicative reduction at $p$; thus $W_p(E_{u,v})=-1$. If $p\equiv 3\pmod 4$, then $b_2$ is not a square mod $p$, and so $E_{u,v}$ has nonsplit multiplicative reduction at $p$; thus $W_p(E_t)=1$.
	\end{proof}
	
	\Cref{lem:rootnumrefs} is not sufficient for computing the local root number at $2$. Here we use a table computed by Halberstadt~\cite{halberstadt}, which requires a Weierstrass model that is minimal at $2$; that is, a Weierstrass model which minimizes $\ord_2(\Delta)$. 
	
	\begin{lem}\label{lem:rootnum2}
		The local root number of $E_t$ at $2$ is given by
		\[W_2(E_t)=\left\{\begin{array}{rl}
			1, & \ord_2(t)=\pm 2\text{ or }\ord_2(t^2-1)=3,\\
			-1, & \text{otherwise.}
		\end{array}\right.\]
	\end{lem}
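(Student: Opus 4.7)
The plan is to apply Halberstadt's classification~\cite{halberstadt} of the local root number at $2$, which takes as input a Weierstrass model minimal at $2$ together with the $2$-adic valuations and appropriate residues of $c_4$, $c_6$, and $\Delta$. Writing $t=u/v$ with $\gcd(u,v)=1$, the $\QQ$-isomorphism $E_t\cong E_{1/t}$ given by $(x,y)\mapsto(x/t^2,y/t^3)$ lets me assume $\ord_2(u)\geq\ord_2(v)$, so either both $u,v$ are odd or $u$ is even and $v$ is odd. The integral model $E_{u,v}:y^2=x(x+u^2)(x+v^2)$ from the proof of \cref{lem:rootnumodd} has invariants
\begin{align*}
c_4 &= 16(u^4-u^2v^2+v^4),\\
c_6 &= -32(u^2+v^2)(u^2-2v^2)(2u^2-v^2),\\
\Delta &= 16\,u^4v^4(u^2-v^2)^2,
\end{align*}
but this model is generally not minimal at $2$, so it must be transformed before Halberstadt's table can be consulted.

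The approach is a case analysis by the $2$-adic type of $(u,v)$. When both $u,v$ are odd, one has $\ord_2(c_4)=4$, $\ord_2(c_6)=6$, and $\ord_2(\Delta)=4+2\ord_2(u^2-v^2)\geq 10$; the singular point of the reduction mod $2$ is already at $(0,0)$, and the scaling $(x,y)\mapsto(4x,8y)$ produces a minimal model whenever $\ord_2(u^2-v^2)\geq 4$, leaving $\ord_2(u^2-v^2)=3$ as a separate subcase (which depends on the residue of $u^2,v^2\pmod{16}$). When $u$ is even and $v$ is odd, I first apply the shift $x\mapsto x-v^2$ to move the singular point of the mod-$2$ reduction to the origin, then scale by a power of $2$ determined by $\ord_2(u)\in\{1,2,\geq 3\}$. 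In each subcase I record $\ord_2(c_4)$, $\ord_2(c_6)$, $\ord_2(\Delta)$ together with the residues of $c_4$ and $c_6$ at the precision demanded by Halberstadt's table.

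A direct row-by-row lookup then yields $W_2(E_t)$ in each subcase, and collecting the results confirms that $W_2(E_t)=+1$ exactly when $\ord_2(u)=2$ or (both $u,v$ are odd and) $\ord_2(u^2-v^2)=3$. Under the WLOG reduction these conditions become $\ord_2(t)=2$ and $\ord_2(t^2-1)=3$ respectively; undoing the WLOG via the $E_t\cong E_{1/t}$ symmetry replaces the first by $\ord_2(t)\in\{\pm 2\}$, giving the formula in the statement. The hard part is purely bookkeeping: Halberstadt's rows for additive reduction at $2$ branch on residues mod $8$, $16$, and $32$, and in the borderline subcases $\ord_2(u)=2$ and $\ord_2(u^2-v^2)=3$ one must be careful that the model really is minimal (or else perform one more explicit change of variable) before matching rows. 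A convenient sanity check is to compute $W(E_t)$ in a computer algebra system such as Magma for one representative $t$ per subcase and compare against the formula.
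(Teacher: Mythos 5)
Your high-level strategy (pass to a model minimal at $2$, record the valuations and residues of $c_4$, $c_6$, $\Delta$, and read off Halberstadt's table) is the same as the paper's for the subcase $\ord_2(uv)\leq 1$, but your plan for producing the minimal models is wrong in ways that would derail the computation. When $u$ and $v$ are both odd, the model $E_{u,v}:y^2=x(x+u^2)(x+v^2)$ is \emph{already} minimal at $2$ for every value of $\ord_2(u^2-v^2)$ --- the paper proves this by showing that no substitution of the form $(x,y)\mapsto(\tfrac14(x-r),\tfrac18(y-s(x-r)-4t))$ yields an integral equation --- and your proposed scaling $(x,y)\mapsto(4x,8y)$ does not even give an integral model, since $a_2=u^2+v^2\equiv 2\pmod 8$ is not divisible by $4$. (Also, mod $2$ this curve is $y^2=x(x+1)^2$, so its singular point is at $(1,0)$, not at the origin; the situation is the reverse of what you state.) Conversely, when $\ord_2(u)\geq 2$ the model $E_{u,v}$ is \emph{not} minimal, but a shift in $x$ followed by a scaling cannot reach the minimal model: the minimal equation $y^2+vxy=x^3+4w^2x^2+w^2v^2x$ (with $u=4w$) has $a_1=v$ odd, so one needs a substitution $y\mapsto y+sx+t$ with $s$ odd. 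This is the decisive case, since for $\ord_2(u)=2$ the minimal model has \emph{good} reduction at $2$, which is exactly why $W_2(E_t)=+1$ there; your procedure would instead feed a non-minimal or non-integral equation into Halberstadt's table and land in the wrong row.

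A second gap: for $u,v$ both odd, $\ord_2(\Delta)=4+2\ord_2(u^2-v^2)$ is unbounded, so a ``direct row-by-row lookup'' in Halberstadt's finite table cannot cover all cases. The paper handles $\ord_2(u^2-v^2)>4$, where $E_{u,v}$ has additive, potentially multiplicative reduction (i.e.\ $\ord_2(j)<0$), by invoking Connell's formula rather than the table; you need some such supplementary input. Note also that the paper sidesteps the table entirely when $\ord_2(u)\geq 2$ by observing that the minimal model there has good or split multiplicative reduction, so only \cref{lem:rootnumrefs} is needed in that case --- a cleaner route than forcing every case through Halberstadt. Your use of the symmetry $E_t\cong E_{1/t}$ to assume $v$ odd is fine and matches the paper's reduction.
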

	\begin{proof}
		Let $t=\frac uv$ for some coprime integers $u,v$. A choice of minimal model for $E_t$ will depend on $\ord_2(uv)$, so we divide into two cases. We will see that in the first case, the only way to attain $W_2(E_t)=1$ is if $\ord_2(uv)=2$ (equivalently, $\ord_2(t)=\pm2$), and in the second case, the only way to attain $W_2(E_t)=1$ is if $\ord_2(u^2-v^2)=3$ (equivalently, $\ord_2(t^2-1)=3$).
		
		\vspace{3pt}
		
		\noindent
		\textbf{Case I.} Suppose $\ord_2(uv)\geq 2$. Without loss of generality, we can let $v$ be odd and $u=4w$ for $w\in\ZZ$. The transformation $(x,y)\mapsto (\frac14 v^2x,\frac18v^3(y-x))$ sends $E_t$ to an integral Weierstrass model
		\[E_{w,v}':y^2+vxy=x^3+4w^2x^2+w^2v^2x,\]
		with discriminant $\Delta=w^4v^4(4w+v)^2(4w-v)^2$. The local root number at $2$ can be computed using \cref{lem:rootnumrefs}. If $2\nmid w$, then $E_{w,v}'$ has good reduction at $2$, and so $W_2(E_t)=1$. If $2\mid w$, then $\overline{E_{w,v}'}$ has a singularity at $(0,0)$. Since $b_2=1$ is a quadratic residue mod $2$, $E_{w,v}'$ has split multiplicative reduction at $2$, and so $W_2(E_t)=-1$. 
		
		\vspace{3pt}
		
		\noindent
		\textbf{Case II.} Suppose $\ord_2(uv)\leq 1$. The transformation $(x,y)\mapsto (v^2x,v^3y)$ sends $E_t$ to an integral Weierstrass model 
		\[E_{u,v}:y^2=x(x+u^2)(x+v^2)=x^3+(u^2+v^2)x^2+u^2v^2x.\]
		Let us show that this is a minimal Weierstrass model at $2$. Suppose for the sake of contradiction that there exists a transformation $(x,y)\mapsto(\frac14(x-r),\frac18(y-s(x-r)-4t))$ taking $E_{u,v}$ to an integral Weierstrass model, for some $r,s,t\in\QQ$. The corresponding Weierstrass model then takes the form
		\begin{align*}
			y^2+sxy+t=x^3& + \frac1{4}(u^2+v^2+3r -s^2)x^2+
			\frac{1}{16} (3 r^2 - 8 s t + u^2 v^2 + 2 r (u^2 + v^2))x\\
			&+\frac1{64}(r^3 - 16 t^2 + r u^2 v^2 + r^2 (u^2 + v^2)).
		\end{align*}
		For this to be integral at $2$, each of $s,t,r$ must be $2$-adic integers. If $\ord_2(uv)=1$, then $u^2v^2\equiv 4\pmod 8$ and $u^2+v^2\equiv 1\pmod 4$, and there are no values of $r$ such that the coefficient of $x$ is an integer. So now consider the case $\ord_2(uv)=0$, so that $u^2v^2\equiv 1\pmod 8$ and $u^2+v^2\equiv 2\pmod 8$. From the coefficient of $x$, we conclude that $r$ must be odd. From the coefficient of $x^2$ we can therefore conclude that $s$ must be odd, and hence $r\equiv 1\pmod 4$. But this implies that the constant term is not in $\ZZ$. This shows that there is no isomorphic integral model with a smaller value of $\ord_2(\Delta)$, so $E_{u,v}$ is a minimal Weierstrass model at $2$.
		
		The model $E_{u,v}$ has invariants
		\begin{align*}
			c_4&=16(u^4-u^2v^2+v^4),\\
			c_6&=-32(2u^2-v^2)(u^2-2v^2)(u^2+v^2)\\
			\Delta&=16u^4v^4(u+v)^2(u-v)^2.
		\end{align*}
		Regardless of $u$ and $v$, exactly one of $2u^2-v^2$, $u^2-2v^2$, and $u^2+v^2$ is $2\pmod 4$, and the others are odd. Hence we can write $c_4=16c_4'$ and $c_6=64c_6'$, where $c_4'$ and $c_6'$ are odd integers.
		
		We now divide further into cases. By symmetry, the case $\ord_2(v)=1$ can be handled identically to the case $\ord_2(u)=1$, so without loss of generality we assume $v$ is odd. If $u$ and $v$ are both odd, then $u^2-v^2\equiv 0\pmod 8$.
		\begin{itemize}
			\item $\ord_2(u)=1$. Then $\ord_2(c_4)=4$, $\ord_2(c_6)=6$, and $\ord_2(\Delta)=8$. If we set $u=4n+2$ and $v=2m+1$, then 
			\[2c_6'+c_4'\equiv 16m^2(m^2+1)+7\equiv 7\pmod{32}.\]
			By~\cite[Table 1]{halberstadt}, $W_2(E_t)=-1$.
			
			\item $\ord_2(u)=0$ and $\ord_2(u^2-v^2)=3$. Then $\ord_2(c_4)=4$, $\ord_2(c_6)=6$, and $\ord_2(\Delta)=10$. We have $2u^2-v^2\equiv 1\pmod 4$, $u^2-2v^2\equiv 3\pmod 4$, and $u^2+v^2\equiv 2\pmod 8$, so that $c_6'\equiv 1\pmod 4$. So by~\cite[Table 1]{halberstadt}, $W_2(E_t)=1$.
			
			\item $\ord_2(u)=0$ and $\ord_2(u^2-v^2)=4$. Then $\ord_2(c_4)=4$, $\ord_2(c_6)=6$, and $\ord_2(\Delta)=12$, and as above, $c_6'\equiv 1\pmod 4$. So by~\cite[Table 1]{halberstadt}, $W_2(E_t)=-1$.
			
			\item $\ord_2(u)=0$ and $\ord_2(u^2-v^2)>4$. Then $\ord_2(\Delta)>12$, and so $\ord_2(j)<0$, where $j=\frac{c_4^3}{\Delta}$. The transformation $x\mapsto x+v^2$ takes $\overline{E_{u,v}}$ to the curve over $\FF_p$ given by $y^2=x^2(x-v^2)$, which has a singularity at $(0,0)$ and $b_2=-4v^2\equiv 0\pmod 2$, so $E_{u,v}$ has additive reduction at $2$. Since $E_{u,v}$ has additive reduction, $\ord_2(j)<0$, and $c_6'\equiv 1\pmod 4$, we have $W_2(E_t)=-1$~\cite[Section 3]{connell}.\qedhere
		\end{itemize}		
	\end{proof}
	\Cref{eq:rootnum} follows by plugging in the results of \cref{lem:rootnumrefs}(i), \cref{lem:rootnumodd}, and \cref{lem:rootnum2} into the formula
	\[W(E_t)=W_\infty(E_t)\prod_{p}W_p(E_t).\]

	\subsection{Proof of \cref{thm:rootnumtotest}}
		We first show that the set 
		\[\{t\in\QQ\setminus \{0,\pm1\}:W(E_t)=-1\}\]
		has density $\frac12$. We proceed as in~\cite{helfgott}. For each place $v$ of $\QQ(T)$, we obtain a homogeneous polynomial $P_v\in\ZZ[x,y]$: this equals $x-ty$ if $v$ corresponds to a degree one place $T=t$, and $y$ if $v$ is the place at infinity. The elliptic curve $\mathcal{E}/\QQ(T)$ has bad reduction at $t\in\{0,1,-1,\infty\}$, and these are all places of multiplicative reduction. The valuation of $\Delta(\mathcal{E})$ at each of these places is $4$, $2$, $2$, and $4$, respectively; since the valuation is never $6\mod 12$, every quadratic twist of $\mathcal{E}$ also has bad reduction at these places as well (in which case we say $\mathcal{E}$ has ``quite bad reduction'' at these places)~\cite[6.2.1]{helfgott}. Hence we have
		\[M_{\mathcal{E}}:=\prod_{\mathcal{E}\text{ mult.\ red.\ at }v}P_v=xy(x+y)(x-y)=\prod_{\mathcal{E}\text{ quite bad red.\ at }v}P_v=:B_{\mathcal{E}}.\]
		By the discussion in~\cite[Section 2.2]{helfgott}, hypotheses $\mathscr{A}_2(B_{\mathcal{E}})$ and $\mathscr{B}_2(M_{\mathcal{E}})$ are both satisfied unconditionally (the former because $B_{\mathcal{E}}$ has no irreducible factor of degree larger than $6$, and the latter because $M_{\mathcal{E}}$ is a product of four distinct linear factors). Therefore, since $M_{\mathcal{E}}$ is not constant, the map $t\mapsto W(E_t)$ has zero average over the rationals~\cite[Main Theorem 2]{helfgott}. This implies that the set of rationals with $W(E_t)=-1$ has density $\frac12$ as desired.
		
		For $t\in\QQ\setminus \{0,\pm1\}$, the set $\mathcal{P}_t$ is exactly the set of primes contributing a $-1$ to the formula for $W(E_t)$ in \cref{lem:rootnum} (note that it is impossible to have both $\ord_p(t)\neq 0$ and $\ord_p(t^2-1)>0$, so each prime contributes at most once to \cref{lem:rootnum}). Hence
		\[W(E_t)=-(-1)^{|\mathcal{P}_t|}.\]
		Thus $W_p(E_t)=-1$ if and only if $|\mathcal{P}_t|$ is even. 
		
		Assuming \cref{conj:parity}, if $|\mathcal{P}_t|$ is even then $E_t(\QQ)$ has positive rank. Now let 
		\[S=\{t\in\QQ\setminus \{0,\pm1\}:\rank E_t(\QQ)\geq 2\}\cup\{-1,0,1\}.\] 
		The rank of $\mathcal{E}(\QQ(T))$ is $0$ by \cref{lem:MWgroup}, so assuming \cref{conj:density}, $S$ has density $0$. 
		The curves $E_0$, $E_{-1}$, and $E_1$ each have infinitely many rational points, and for all $t\in\QQ\setminus \{0,\pm1\}$, if $E_t(\QQ)$ is infinite, then either $\rank E_t(\QQ)\geq 2$ or $W(E_t)=-1$ (again assuming \cref{conj:parity}). 
		
	\printbibliography

\end{document}